\documentclass{article}
\usepackage{latexsym, amsmath, amssymb}
\usepackage{graphicx}
\usepackage{epstopdf}
\usepackage{amsmath, amsthm, amscd, amsfonts}
\usepackage{amsmath}
\usepackage{array}
\usepackage{multirow}
\usepackage{amssymb}
\usepackage{float}
\usepackage{enumerate}
\usepackage[a4paper, total={6in, 10in}]{geometry}

\newtheorem{thm}{Theorem}[section]

\newtheorem{lem}{Lemma}
\theoremstyle{definition} 
\theoremstyle{question} 
\theoremstyle{remark} 

\makeatletter
\newcommand*{\rom}[1]{\expandafter\@slowromancap\romannumeral #1@}
\makeatother

\def\bege{\begin{equation}} \def\ende{\end{equation}}

   \def\begr{\begin{eqnarray}}
\def\endr{\end{eqnarray}} 
 
\def\bege{\begin{equation}} \def\ende{\end{equation}}
\def\begr{\begin{eqnarray}} \def\endr{\end{eqnarray}} \def\bnum{\begin{enumerate}} \def\enum{\end{enumerate}}
\begin{document}

\begin{center}
\textbf{UNBOUNDED MIXED RESOLVABILITY OF WEB GRAPH AND PRISM RELATED GRAPH}
\end{center}

\begin{center}
Sunny Kumar Sharma and Vijay Kumar Bhat
\end{center}

\begin{center}
School of Mathematics, Shri Mata Vaishno Devi University, Katra-$182320$, Jammu and Kashmir, India.
\end{center}
\begin{center}
  1. sunnysrrm94@gmail.com 2. vijaykumarbhat2000@yahoo.com
\end{center}
\textbf{Abstract} Let $\mathbb{E}(H)$ and $\mathbb{V}(H)$ denote the edge set and the vertex set of the simple connected graph $H$, respectively. The mixed metric dimension of the graph $H$ is the graph invariant, which is the mixture of two important graph parameters, the edge metric dimension and the metric dimension. In this article, we compute the mixed metric dimension for the two families of the plane graphs viz., the Web graph $\mathbb{W}_{n}$ and the Prism allied graph $\mathbb{D}_{n}^{t}$. We show that the mixed metric dimension is non-constant unbounded for these two families of the plane graph. Moreover, for the Web graph $\mathbb{W}_{n}$ and the Prism allied graph $\mathbb{D}_{n}^{t}$, we unveil that the mixed metric basis set $M_{G}^{m}$ is independent.\\\\
\textbf{MSC(2020):} 05C12, 05C76, 05C90.\\\\
\textbf{Keywords:} Metric dimension, mixed metric dimension, independent resolving set, connected graph.\\

\section{Introduction and Preliminaries}
The graph invariant $metric$ $dimension$ is among the important and highly active research topic in Graph Theory. This fundamental concept of metric dimension was founded by two groups of researchers independently viz., Slater in \cite{ps} and Harary and Melter in \cite{fr}, in the late seventies. The set $M_{G}$ of points in the taken metric space with the property (or characteristic) that any point of the space is determined uniquely by its distances from the points of $M_{G}$, is referred to as the $generator$ (or $metric$ $generator$) of the given metric space. These metric generating sets are called the locating sets by Slater in \cite{ps} and the resolving sets by Melter and Harary in \cite{fr}, respectively.

After these two important initial papers \cite{ps,fr}, several works regarding theoretical properties, as well as applications, of this graph invariant were published. Initially, Slater considered special acknowledgment of a thief in the network, while others noticed problems in picture preparing (or image processing) and design acknowledgment (or pattern recognition) \cite{mt}, applications to science are given in \cite{ce}, to the route of exploring specialist (navigating agent or robots) in systems (or networks) are examined in \cite{srb}, to issues of check and system revelation (or network discovery) in \cite{zf}, application to combinatorial enhancement (or optimization) is yielded in \cite{co}, and for more work see \cite{sv,ssv,im}.\par

Suppose $\mathbb{E}(H)$ and $\mathbb{V}(H)$ denote the edge set and the vertex set of the simple connected graph $H$, respectively. The distance between two vertices $\alpha,\beta \in \mathbb{V}(H)$, is denoted and defined as $d_{H}(\alpha,\beta)=$ length of the shortest possible $\alpha-\beta$ path in $H$ and the $d_{H}(\alpha,\varepsilon)=min\{d_{H}(\alpha,\beta_{1}), d_{H}(\alpha,\\ \beta_{2})\}$ represents the distance between an edge $\varepsilon=\beta_{1}\beta_{2}$ and the vertex $\alpha$ in $H$. If the distance between the vertex $\alpha$ and an element $\beta$ is not equaled to the distance between the same vertex $\alpha$ and an element $\gamma$ in $H$ (where $\beta,\gamma \in \mathbb{V}(H)\cup\mathbb{E}(H)$), then one can say that the vertex $\alpha$ distinguish (determines or resolves) two elements $\beta$ and $\gamma$ in $H$.\par

A set $M_{G}$ consisting of the vertices of the graph $H$, is termed as the $metric$ $generator$ for $H$, if the vertices of $M_{G}$ distinguish (determines or resolves) every pair of different vertices of the connected graph $H$. These metric generators are called $metric$ $basis$ for $H$ if it has the minimum cardinality and this cardinal number of the metric basis is referred to as the $metric$ $dimension$ of the graph $H$, denoted by $\beta(H)$ or $dim(H)$.

On the other hand, concerning the hypothetical examinations of this important topic, various perspectives of metric generators $M_{G}$ have been depicted in the recent literature, which has profoundly added to acquire more understanding into numerical properties of this graph invariant related with distances in networks. Several authors working on this topic have presented different varieties of metric generators like for example, independent resolving sets, resolving dominating sets, strong resolving sets, local metric sets, edge resolving sets, strong resolving partitions, mixed metric sets, etc. For these see references in \cite{flc, flm, sv, ssv1}. A set $L$ consisting of vertices of the graph $H$ is said to be an independent resolving set for $H$, if $L$ is both resolving (metric generator) and independent.\par

One can see that the metric dimension deals with the vertices of the graph by its definition, a similar concept dealing with the edges of the graph introduced by Kelenc et al. in \cite{flc}, called the edge metric dimension of the graph $H$, which uniquely identifies the edges related to a graph $H$. For an edge $\varepsilon=\beta_{1}\beta_{2}$ and a vertex $x$ the distance between them is defined as $d_{H}(x,\varepsilon)=min\{d_{H}(x,\beta_{1}),d_{H}(x,\beta_{2})\}$. A subset $M_{G}^{\varepsilon}$ is called an edge metric generator for $H$, if any two different edges of $H$ are distinguish by some vertex of $M_{G}^{\varepsilon}$. The edge metric generator with minimum cardinality is termed as edge metric basis and that cardinality is known as the edge metric dimension of the graph $H$, and which is denoted by $edim(H)$ or $\beta_{E}(H)$. A set $L_{E}$ consisting of vertices of the graph $H$ is said to be an independent edge metric generator for $H$, if $L_{E}$ is both edge metric generator and independent.

\textbf{Mixed metric dimension:}
Recently, a new kind of graph parameter was introduced by Kelenc et al. in \cite{flm}, which is the composition of both, the edge metric dimension and the metric dimension and called the mixed metric dimension for a graph $H$. A subset $M_{G}^{m}$ is called a mixed metric generator for $H$, if any two different elements of $\mathbb{V}(H)\cup\mathbb{E}(H)$ are distinguished by some vertex of $M_{G}^{m}$. For an ordered subset $\mathfrak{L}_{M}=\{\zeta_{1}, \zeta_{2}, \zeta_{3},...,\zeta_{p}\}$ of vertices of the graph $H$, and an element $y\in \mathbb{V}(H)\cup \mathbb{E}(H)$, the mixed metric code$\backslash$mixed metric representation of $y$ regarding $M_{G}^{m}$ is the ordered $p$-tuple $\zeta_{M}(y|M_{G}^{m})=(d_{H}(y,\zeta_{1}), d_{H}(y,\zeta_{2}), d_{H}(y,\zeta_{3}),...,d_{H}(y,\zeta_{p}))$.

If for any two distinct elements $y_{1}$ and $y_{2}$ of $\mathbb{V}(H)\cup \mathbb{E}(H)$, $\zeta_{M}(y_{1}|M_{G}^{m})\neq \zeta_{M}(y_{2}|M_{G}^{m})$, then $M_{G}^{m}$ is said to be a mixed metric generator (or shortly, MMG) for $H$. The mixed metric generator with minimum cardinality is termed as the mixed metric basis, and that cardinality is known as the mixed metric dimension of the graph $H$, and which is denoted by $mdim(H)$ or $\beta_{M}(H)$. For our gentle purpose, by MMG and MMD we denote mixed metric generator and mixed metric dimension, respectively. Now, like edge metric dimension and the metric dimension, for this graph invariant one can define that, set $L_{M}$ consisting of vertices of the graph, $H$ is said to be an independent mixed metric generator for $H$, if $L_{M}$ is both a MMG and independent.

In this study, we consider two important families of the plane graphs viz., the prism allied graph $D^{t}_{n}$ (\cite{fwr}, see Figure 1) and the Web graph $\mathbb{W}_{n}$ (\cite{ys}, see Figure 2) and we obtain their MMD. Recently, the metric dimension and the edge metric dimension of these two families of the plane graphs were computed. For the metric dimension of these families of plane graphs we have the following results:

\begin{thm}\cite{fwr}
Let $D^{t}_{n}$ be the Prism allied graph on $6n$ edges and $4n$ vertices. Then, for $n\geq6$, we have $\beta(D^{t}_{n})=3$.
\end{thm}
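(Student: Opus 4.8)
The plan is to establish the two inequalities $\beta(D^{t}_{n})\le 3$ and $\beta(D^{t}_{n})\ge 3$ separately; together they give the claimed value $3$ for every $n\ge 6$. Throughout I would work with a fixed cyclic labelling of the $4n$ vertices of $D^{t}_{n}$ into its four ``radial'' layers, each indexed by $\ZZ_{n}$ (the outer rim, the two intermediate layers, the inner rim), and I would record the adjacency pattern between consecutive layers, together with the fact that $D^{t}_{n}$ carries a dihedral automorphism group of order $2n$ (the rotation shifting every layer index by $1$, and a reflection).

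For the upper bound I would propose an explicit three-vertex candidate $W=\{w_{1},w_{2},w_{3}\}$ taken on the outer rim and positioned so as to destroy the dihedral symmetry: two (nearly) consecutive rim vertices together with a third rim vertex placed roughly antipodally, the third being there precisely to absorb the reflection-ambiguity that would otherwise survive for even $n$. The key step is to write, for each of the four layers, the distance from the generic vertex with cyclic index $i$ to each $w_{j}$ as an explicit function of $i$ — essentially $\min(|i-c_{j}|,\,n-|i-c_{j}|)$ plus a small layer-dependent constant, a formula that is valid once $n\ge 6$ so that no short radial shortcut through the inner layers beats the route along the rim. One then checks that the resulting coordinate triples are pairwise distinct: within a fixed layer the first coordinate already separates most vertices, the residual collisions are killed by the second coordinate, and the third coordinate distinguishes the two ``sides'' of the graph; vertices lying in different layers are separated because their triples carry different constant offsets (and parities). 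This verification is routine but must be organised layer by layer and according to the sign of $i-c_{j}$.

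For the lower bound I would show that no $2$-element set resolves $D^{t}_{n}$. Since $D^{t}_{n}$ is not a path, $\beta(D^{t}_{n})\ge 2$ is immediate, so it suffices to exclude $\beta(D^{t}_{n})=2$. Assume for contradiction that $W=\{x,y\}$ resolves $D^{t}_{n}$. Using the order-$2n$ automorphism group I would reduce to a bounded list of configurations for the pair $\{x,y\}$ according to the layers they occupy and their angular offset. In each configuration I would exhibit two distinct vertices with identical distance vectors to both $x$ and $y$: typically a mirror pair obtained by reflecting across the symmetry axis through $x$, corrected when necessary by a further symmetry fixing $y$ as well; and when $x$ and $y$ share no common axis, a pair of vertices on a common layer chosen to be ``radially shielded'' from both $x$ and $y$, so that each of $x,y$ reaches them through the same bottleneck vertex. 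Producing such an unresolved pair in every case yields the contradiction, hence $\beta(D^{t}_{n})\ge 3$.

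I expect the main obstacle to be the lower-bound case analysis. Because $D^{t}_{n}$ has four concentric layers joined by non-trivially offset rungs, the distance from a vertex to a rim vertex is not simply a function of angular displacement, so one must carefully track which of several competing geodesics (around the outer rim, around the inner rim, or cutting radially) realises $d(x,\cdot)$ before one can assert that a candidate pair is unresolved; a secondary nuisance is that a handful of sporadic coincidences force the hypothesis $n\ge 6$, so every generic distance formula has to be stated and used only in that range. The upper bound, by contrast, is essentially bookkeeping once the asymmetric triple $W$ has been chosen correctly.
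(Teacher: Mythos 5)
A preliminary remark on comparison: the paper you are reading does not prove this statement at all — Theorem 1 is quoted as a known preliminary result from \cite{fwr} — so your plan can only be judged on its own merits, not against an in-paper argument. On those merits, the two-inequality skeleton (an explicit $3$-set for $\beta(D^{t}_{n})\le 3$, impossibility of a resolving pair for $\beta(D^{t}_{n})\ge 3$) is the standard and correct strategy, and the upper-bound half is plausible: distances from the outer-layer vertices do reduce to cyclic expressions $\min\{|i-c_{j}|,\,n-|i-c_{j}|\}$ plus layer-dependent constants, and a choice such as two consecutive outer vertices plus a roughly antipodal one can be checked to work (e.g.\ $\{s_{1},s_{2},s_{5}\}$ resolves $D^{t}_{8}$). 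Two small inaccuracies there: the outer layers are not ``rims'' (the $s_{i}$ are pendant and the $r_{i}$ induce no cycle), and since the graph has no hub there are no ``radial shortcuts'' for $n\ge 6$ to rule out — that hypothesis is not doing the work you assign to it.

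The genuine gap is the lower bound, which is the substantive half of the theorem. First, the claimed reduction ``to a bounded list of configurations'' via the dihedral group of order $2n$ is not available: after normalizing one landmark, the angular offset of the second is a free parameter taking $\Theta(n)$ values, so the case analysis must be run with general-offset distance formulas, not finitely many pictures. Second, neither of your two mechanisms produces an unresolved pair in the generic case: a mirror pair across the axis through $x$ has equal distances to $y$ only when $y$ also lies on that axis, and for a generic pair $\{x,y\}$ no nontrivial automorphism fixes both (the ``further symmetry fixing $y$ as well'' does not exist), while the ``radially shielded'' pair is never made concrete — the only genuine bottleneck in $D^{t}_{n}$ is $r_{i}$ in front of its pendant $s_{i}$, which does not by itself yield two distinct vertices with equal codes. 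In this graph the collisions that actually defeat a $2$-set are typically cross-layer, cross-index pairs; for instance, for $n=7$ and $W=\{q_{1},p_{3}\}$ one has $d(q_{5},q_{1})=d(p_{6},q_{1})=3$ and $d(q_{5},p_{3})=d(p_{6},p_{3})=3$, and note that the mirror image of $q_{5}$ in the axis through spoke $1$, namely $q_{4}$, is resolved by $p_{3}$, so the reflection heuristic misses this. Until you exhibit, for every layer-type combination and every offset of $\{x,y\}$ (as an explicit function of that offset), such an unresolved pair, the inequality $\beta(D^{t}_{n})\ge 3$ remains unproven, and the proposal as it stands does not close it.
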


\begin{thm}\cite{ys}
Let $\mathbb{W}_{n}$ be the Web graph on $4n$ edges and $3n$ vertices. Then, for $n\geq3$, we have

\begin{eqnarray*}
   \beta(\mathbb{W}_{n}) = \begin{cases}
                               2,  & if\  n\  is \ odd;\\
                               3,  & otherwise
                                             \end{cases}
\end{eqnarray*}
\end{thm}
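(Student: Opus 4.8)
The plan is to match a lower bound against an explicit construction, with all the bookkeeping organized around the leaf structure of the pendant vertices. Label the inner $n$-cycle $u_{0},\dots,u_{n-1}$, the outer $n$-cycle $v_{0},\dots,v_{n-1}$ joined to it by the spokes $u_{i}v_{i}$, and let $w_{i}$ be the pendant vertex adjacent only to $v_{i}$, indices read modulo $n$. Writing $\delta(i,j)=\min\{\,|i-j|,\ n-|i-j|\,\}$ for the cyclic distance, the first step is to record the elementary distance formulas $d(u_{i},u_{j})=d(v_{i},v_{j})=\delta(i,j)$ and $d(u_{i},v_{j})=\delta(i,j)+1$, together with $d(w_{i},x)=d(v_{i},x)+1$ for every vertex $x\neq w_{i}$ and $d(w_{i},w_{j})=\delta(i,j)+2$, both of which hold because each $w_{i}$ is a leaf at $v_{i}$. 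The decisive consequence is a structural constraint on resolving sets: a vertex $x$ can separate $u_{i}$ from $w_{i}$ only if $x=w_{i}$ or $x$ lies on the inner cycle (for any other $x$ one checks $d(x,u_{i})=d(x,w_{i})$), so every resolving set of $\mathbb{W}_{n}$ of size smaller than $n$ must contain at least one inner-cycle vertex.

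That $\beta(\mathbb{W}_{n})\geq 2$ is immediate, as $\mathbb{W}_{n}$ is not a path. For even $n$ the substance is $\beta(\mathbb{W}_{n})\geq 3$, i.e.\ that no two vertices resolve $\mathbb{W}_{n}$. By the structural constraint a putative $2$-element resolving set contains an inner vertex, which the rotational symmetry of $\mathbb{W}_{n}$ lets us take to be $u_{0}$, so the set is $\{u_{0},b\}$; I would split into cases on the layer and index of $b$ and exhibit two vertices with equal codes in each. If $b=v_{k}$ is an outer vertex (the pendant case $b=w_{k}$ reduces to it, since $d(w_{k},\cdot)=d(v_{k},\cdot)+1$ away from $w_{k}$), then $\{u_{0},b\}$ fails to separate $u_{1}$ from $v_{0}$ for $1\le k\le n/2$ (indeed $d(u_{0},u_{1})=1=d(u_{0},v_{0})$ and $d(v_{k},u_{1})=\delta(1,k)+1=k=\delta(0,k)=d(v_{k},v_{0})$), and a reflection fixing $u_{0}$ reduces the remaining positions of $k$ to this case. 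If $b=u_{n/2}$, then $u_{0}$ and $u_{n/2}$ are cyclically antipodal and already fail to separate $u_{t}$ from $u_{n-t}$. If $b=u_{k}$ with $0<k<n/2$ (every other inner choice reduces here by symmetry), then $\{u_{0},u_{k}\}$ fails to separate $v_{0}$ from $u_{n-1}$, since $d(u_{0},v_{0})=1=d(u_{0},u_{n-1})$ and $\delta(n-1,k)=k+1=\delta(0,k)+1$. It is precisely the demand that this last estimate hold for \emph{every} admissible $k$ that forces $n$ even; for odd $n$ the choice $k=(n-1)/2$ escapes it, which is why two landmarks suffice in that case.

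For the upper bound when $n$ is odd I would take the set $\{u_{0},u_{(n-1)/2}\}$ and verify directly from the distance formulas that the induced map on $\mathbb{V}(\mathbb{W}_{n})$ is injective: a layer-by-layer check using that the codes of the $v_{i}$ and the $w_{i}$ are the codes of the $u_{i}$ shifted by $(1,1)$ and $(2,2)$, and that two near-diametral landmarks resolve an odd cycle. For even $n$ I would exhibit an explicit $3$-element set, for instance $\{u_{0},u_{1},u_{n/2}\}$, and run the same layered verification, the antipodal third landmark being exactly what removes the $u_{t}\leftrightarrow u_{n-t}$ ambiguity. The step I expect to be the main obstacle is the even-$n$ lower bound: setting up the symmetry reduction cleanly enough that only the few configurations above remain, and, in the two-inner-landmark sub-case, picking the correct near-antipodal pair that the chosen landmarks cannot separate for all admissible indices. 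The remaining ingredients, namely the distance formulas, the reduction forcing an inner-cycle landmark, and the injectivity checks for the two proposed bases, are routine once this scaffolding is in place.
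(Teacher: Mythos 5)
A preliminary but important remark: the paper does not prove this statement at all --- it is quoted from Zhang and Gao \cite{ys} as background for the mixed metric dimension results --- so there is no internal proof to compare yours against, and your proposal has to stand on its own. On its own terms it is essentially sound. The distance formulas are correct; the observation that only $w_{i}$ itself or an inner-cycle vertex can separate $u_{i}$ from $w_{i}$ does force an inner landmark into any resolving set of size less than $n$; the witness pairs in the even-$n$ case have equal codes in the stated ranges; and both proposed bases work: $\{u_{0},u_{(n-1)/2}\}$ resolves $\mathbb{W}_{n}$ for odd $n$ because on the inner cycle the coordinate sum $d(u_{0},x)+d(u_{(n-1)/2},x)$ only takes the values $\frac{n-1}{2}$ and $\frac{n+1}{2}$, so the $(1,1)$ and $(2,2)$ shifts of the $v$- and $w$-layers cannot collide with other layers, while within a layer any two distinct vertices of an odd cycle are resolved; and $\{u_{0},u_{1},u_{n/2}\}$ works for even $n$ because $d(u_{0},u_{i})+d(u_{n/2},u_{i})=n/2$ identically, which kills all cross-layer coincidences.

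Two repairs are needed, both local. First, in the even lower bound your reflection reduction does not cover $k=0$: the reflection fixing $u_{0}$ also fixes $v_{0}$ and $w_{0}$, and for $b\in\{v_{0},w_{0}\}$ your witness pair $u_{1},v_{0}$ either contains $b$ or is actually separated by it (e.g.\ $d(w_{0},u_{1})=3\neq 1=d(w_{0},v_{0})$). Use $u_{1},u_{n-1}$ there instead: $d(u_{0},u_{1})=d(u_{0},u_{n-1})=1$, $d(v_{0},u_{1})=d(v_{0},u_{n-1})=2$, $d(w_{0},u_{1})=d(w_{0},u_{n-1})=3$. Second, in the even upper bound the role you assign to the antipodal landmark is not quite right: $\{u_{0},u_{1}\}$ already separates $u_{t}$ from $u_{n-t}$ (adjacent vertices resolve any cycle); what $u_{n/2}$ actually repairs are cross-layer collisions such as $u_{n/2}$ versus $v_{n/2-1}$, which have identical codes with respect to $\{u_{0},u_{1}\}$, and the clean way to see this is the identity $d(u_{0},u_{i})+d(u_{n/2},u_{i})=n/2$ noted above. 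Neither point changes the conclusion, but the $k=0$ case must be stated explicitly, and the verification of the even basis should be organized around that antipodal sum identity rather than around the cycle ambiguity.
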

and regarding the edge metric dimension, we have

\begin{thm}\cite{fwr}
Let $D^{t}_{n}$ be the Prism allied graph on $6n$ edges and $4n$ vertices. Then, for $n\geq3$, we have
\begin{eqnarray*}
   \beta_{E}(D^{t}_{n}) = \begin{cases}
                               4,  & if\  n=3,4,;\\
                                \left\lceil \frac{n}{2}\right\rceil+1,  & otherwise              .
                                             \end{cases}
\end{eqnarray*}
\end{thm}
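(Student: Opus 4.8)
The plan is to begin by fixing a workable labelling of $D_n^t$. Writing its $4n$ vertices in four concentric ``layers'' indexed cyclically modulo $n$ — an inner rim $a_1,\dots,a_n$, two intermediate layers $b_1,\dots,b_n$ and $c_1,\dots,c_n$, and an outer rim $d_1,\dots,d_n$ — with the $6n$ edges split into the six natural types (inner rim edges $a_ia_{i+1}$, inner spokes $a_ib_i$, the ``zig-zag'' edges between the $b$- and $c$-layers, outer spokes $c_id_i$, and outer rim edges $d_id_{i+1}$), one sees $D_n^t$ as a $3$-regular graph carrying the rotation $i\mapsto i+1$ as an automorphism. I would then compute, once and for all, closed forms for the distance $d(x,e)$ from each vertex type $x\in\{a_i,b_i,c_i,d_i\}$ to an edge $e$ of each of the six types, expressed through the cyclic distance $\delta(i,j)=\min\{|i-j|,n-|i-j|\}$; every later step is just bookkeeping with these tables, and the rotational symmetry lets me normalise one chosen basis vertex to position $1$.

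For the lower bound I would show that no set of $\lceil n/2\rceil$ vertices can be an edge metric generator. The mechanism is that $D_n^t$ contains about $n$ ``sister'' edge-pairs that are very hard to separate: for instance the two zig-zag edges incident to a common intermediate vertex, $e=b_ic_i$ and $f=b_ic_{i+1}$, satisfy $d(w,e)=\min\{d(w,b_i),d(w,c_i)\}$ and $d(w,f)=\min\{d(w,b_i),d(w,c_{i+1})\}$, so a vertex $w$ separates $e$ from $f$ only when its shortest route reaches the local structure strictly through exactly one of $c_i,c_{i+1}$; by the rotational symmetry a single $w$ can meet this condition for only a bounded number — in fact at most two — of the indices $i$. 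Hence resolving all these sister pairs already costs at least $\lceil n/2\rceil$ vertices, and a short extra argument exhibiting one further pair (coming from the rims or the spokes) that survives every $\lceil n/2\rceil$-set pushes the bound to $\lceil n/2\rceil+1$. Making this counting tight — pinning the per-vertex contribution to exactly two and identifying the surviving pair — is where I expect the real difficulty to lie, since it is the one place that needs a global combinatorial argument rather than a local distance computation; it is also where the cases $n=3,4$ must be handled by hand, the graph there being too small for the asymptotic count, and a direct check shows a $3$-set never resolves all edges while a suitable $4$-set does.

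For the upper bound I would produce an explicit generator of size $\lceil n/2\rceil+1$: take every second vertex of the outer rim, $d_1,d_3,d_5,\dots$, together with one ``anchor'' in the inner layer such as $a_1$, adjusting the last outer index and the anchor according to the parity of $n$ (and replacing this by a direct $4$-vertex choice when $n=3,4$). Reading off the distance tables, I would organise the $6n$ edges by type, note that the coordinates coming from the alternating outer vertices already determine the cyclic position of any edge up to a single reflection-type ambiguity, and then check that the anchor coordinate removes that ambiguity in every case; this verification is routine but must be run through the six edge-types and both parities of $n$. Combining the two bounds yields $\beta_E(D_n^t)=\lceil n/2\rceil+1$ for $n\ge5$ and $\beta_E(D_n^t)=4$ for $n=3,4$.
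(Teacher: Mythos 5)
This statement is Theorem 3 of the paper, which is only quoted from \cite{fwr}; the present paper contains no proof of it, so your attempt has to stand on its own — and as written it does not, because it is built on an incorrect model of $D^{t}_{n}$. The graph defined in Section 2 has vertex classes $p_{\pounds}$ (inner cycle), $q_{\pounds}$ (second cycle), $r_{\pounds}$ (adjacent to $q_{\pounds}$ and $q_{\pounds+1}$, forming the $n$ triangles), and $s_{\pounds}$ (pendant, attached only to $r_{\pounds}$), with edge types $p_{\pounds}p_{\pounds+1}$, $p_{\pounds}q_{\pounds}$, $q_{\pounds}q_{\pounds+1}$, $r_{\pounds}q_{\pounds}$, $r_{\pounds}q_{\pounds+1}$, $r_{\pounds}s_{\pounds}$. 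It is not $3$-regular (the $q_{\pounds}$ have degree $5$, the $s_{\pounds}$ degree $1$) and it has no outer rim: your edge type $d_{i}d_{i+1}$ does not exist, while the cycle edges $q_{\pounds}q_{\pounds+1}$ that do exist are absent from your list. Since every later step — the closed-form distance tables, the choice of ``sister'' pairs, and the explicit generator made of alternate outer-rim vertices $d_{1},d_{3},\dots$ — is phrased in terms of this nonexistent structure, none of it can be checked against the actual graph; in particular, with pendant vertices present the hardest pairs to separate are the ones around $r_{\pounds}$ (the pendant edge $r_{\pounds}s_{\pounds}$ against $r_{\pounds}q_{\pounds}$ and $r_{\pounds}q_{\pounds+1}$), exactly the local configuration this paper itself exploits in its Lemmas 1--3 for the mixed version.

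Even setting the modelling error aside, the lower bound has a genuine logical gap that you yourself flag: the claim that a single vertex can separate at most two of the roughly $n$ sister pairs is precisely the global counting step that would give $\lceil n/2\rceil$, and it is asserted rather than proved; likewise the ``one further pair that survives every $\lceil n/2\rceil$-set'' needed to push the bound to $\lceil n/2\rceil+1$ is never exhibited, and the upper-bound verification is only described, not carried out. The overall programme (alternating landmarks plus one anchor, with $n=3,4$ done by hand) is plausible once transplanted to the correct graph — e.g.\ with the landmarks taken among the pendant vertices $s_{\pounds}$ — but with the graph misidentified and the two decisive steps missing, the proposal does not establish the stated formula.
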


\begin{thm}\cite{ys}
Let $\mathbb{W}_{n}$ be the Web graph on $4n$ edges and $3n$ vertices. Then, for $n\geq3$, we have $\beta_{E}(\mathbb{W}_{n})=3$.
\end{thm}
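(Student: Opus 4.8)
Realise $\mathbb{W}_{n}$ explicitly: let $a_{1},\dots,a_{n}$ be the inner cycle ($a_{i}\sim a_{i+1}$, subscripts modulo $n$), $b_{1},\dots,b_{n}$ the outer cycle ($b_{i}\sim b_{i+1}$), with spokes $a_{i}\sim b_{i}$, and let $c_{1},\dots,c_{n}$ be the pendant vertices, $b_{i}\sim c_{i}$. Writing $d_{n}(i,j)=\min\{\,|i-j|,\,n-|i-j|\,\}$ for the distance on $C_{n}$, a short inspection of shortest paths (the outer cycle never pays to be bypassed through the inner one, and a pendant vertex is reached only through its support) gives $d(a_{i},a_{j})=d(b_{i},b_{j})=d_{n}(i,j)$, $d(a_{i},b_{j})=d_{n}(i,j)+1$, $d(c_{i},z)=d(b_{i},z)+1$ for every $z\neq c_{i}$, and $d(c_{i},c_{j})=d_{n}(i,j)+2$. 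The edges form four orbits under the dihedral symmetry group: the inner edges $f_{i}=a_{i}a_{i+1}$, the spokes $s_{i}=a_{i}b_{i}$, the outer edges $g_{i}=b_{i}b_{i+1}$, and the pendant edges $e_{i}=b_{i}c_{i}$; here $\deg a_{i}=3$, $\deg b_{i}=4$, $\deg c_{i}=1$, and $d(z,e_{i})=d(z,b_{i})$ whenever $z\neq c_{i}$.

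\emph{Lower bound $\beta_{E}(\mathbb{W}_{n})\geq 3$.} Assume for contradiction that $\{x,y\}$, $x\neq y$, is an edge metric generator. If one of $x,y$, say $x$, has degree at least $3$, consider the (at least three) edges incident to $x$: each lies at distance $0$ from $x$, and for the other vertex $y$ each such edge $xw$ has $d(y,xw)=\min\{d(y,x),d(y,w)\}\in\{\,d(y,x),\,d(y,x)-1\,\}$ since $w$ is a neighbour of $x$. Thus $y$ takes at most two values on at least three edges, so two edges at $x$ receive the same value from $y$; having also the same distance $0$ from $x$, they are unresolved -- a contradiction. The only remaining possibility is $x=c_{p}$, $y=c_{q}$, both pendant; but then, since $d(c_{k},s_{i})=d(c_{k},e_{i})$ for all $k\neq i$, neither $c_{p}$ nor $c_{q}$ resolves the pair $(s_{i},e_{i})$ whenever $i\notin\{p,q\}$, and such an $i$ exists since $n\geq 3$. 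In all cases we reach a contradiction. (The same comparison of $s_{i}$ with $e_{i}$ shows, more generally, that every edge metric generator of $\mathbb{W}_{n}$ must contain an inner vertex, which suggests the form of the generator below.)

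\emph{Upper bound $\beta_{E}(\mathbb{W}_{n})\leq 3$.} The plan is to produce one explicit generator of size three containing an inner vertex and verify it directly. A natural candidate is $S=\{a_{1},a_{2},a_{\lceil n/2\rceil+1}\}$ (the small cases $n=3,4$ being disposed of with an ad hoc triple). Using the distance formulas one writes $\zeta_{M}(h\mid S)$ for a generic edge $h$ of each of the four orbits $f_{i},s_{i},g_{i},e_{i}$ as an explicit function of its index, and then checks injectivity over all edges. The two coordinates coming from the neighbouring inner vertices $a_{1},a_{2}$ determine the index of an edge up to at most one reflection and already separate most inter-orbit pairs -- for instance all pairs of pendant edges, since for $n\geq 3$ no nontrivial element of $D_{n}$ fixes both vertices $1$ and $2$ -- while the coordinate from the ``antipodal'' inner vertex $a_{\lceil n/2\rceil+1}$ removes the remaining ties, notably those among the three edges at $a_{1}$ and at $a_{2}$ and the sporadic cross-orbit coincidences (such as $e_{1}$ against $s_{n}$).

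Combining the two bounds gives $\beta_{E}(\mathbb{W}_{n})=3$ for all $n\geq 3$. The lower bound is short; the real labour is the exhaustive injectivity check for the upper bound, where all pairs of the $4n$ edges must be organised into the four orbits and their pairings and the small cases $n=3,4$ handled separately -- that is the step I expect to be the main obstacle.
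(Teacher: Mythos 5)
Your lower bound is correct and complete: the pigeonhole argument at a vertex of degree at least $3$ (all incident edges get first coordinate $0$ and the second landmark can only assign two values, $d(y,x)$ and $d(y,x)-1$), together with the observation that two pendant landmarks $c_{p},c_{q}$ leave the pair $(s_{i},e_{i})$ unresolved for any $i\notin\{p,q\}$, does rule out every candidate generator of size at most two, and your distance formulas for the prism-plus-pendants structure are right. Note, for context, that the paper does not prove this statement at all -- it is Theorem 4, quoted from the reference [14] (Zhang and Gao) -- so the comparison here is only with what a complete proof would require.

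The genuine gap is the upper bound, which in your write-up is a plan rather than a proof. You propose $S=\{a_{1},a_{2},a_{\lceil n/2\rceil+1}\}$ and explicitly defer the injectivity check, but that check is precisely where the content of the theorem lies, and it is not routine. For example, $d(a_{k},e_{i})=d_{n}(k,i)+1$ while $d(a_{k},g_{i-1})=\min\{d_{n}(k,i-1),d_{n}(k,i)\}+1$ and $d(a_{k},g_{i})=\min\{d_{n}(k,i),d_{n}(k,i+1)\}+1$; hence $e_{i}$ is separated from its two neighbouring outer edges only if $S$ contains, for every $i$, a landmark strictly closer to $i-1$ than to $i$ and another strictly closer to $i+1$ than to $i$. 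Similar sporadic coincidences occur between spokes, outer edges and pendant edges (your own example $e_{1}$ versus $s_{n}$), and whether your particular triple satisfies all of these simultaneously for every $n$ (even and odd, plus the ad hoc cases $n=3,4$ you set aside) is exactly what must be exhibited -- typically by writing out the four families of edge codes as explicit functions of the index, split by parity, and checking all cross-orbit pairs, as is done in the cited source. Until that verification (or a cleverer argument replacing it) is supplied, you have only proved $\beta_{E}(\mathbb{W}_{n})\geq 3$, not the equality.
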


Throughout this article, all vertex indices are taken to be modulo $n$. The present paper is organized as follows: \par

In section 2, we study the MMD of the Prism allied graph $D^{t}_{n}$, when the MMG $M_{G}^{m}$ is independent (see Figures 1). In section 3, we study the MMD of the Web graph $\mathbb{W}_{n}$, when the MMG $M_{G}^{m}$ is independent (see Figures 2), and in our last section, we conclude our results and findings regarding these two important families of the plane graphs.

\section{Mixed Resolvability of the Prism Allied Graph $\mathbb{D}^{t}_{n}$}

The Prism allied graph $\mathbb{D}^{t}_{n}$ \cite{fwr} has vertex set of cardinality $4n$ and an edge set of cardinality $6n$, indicated by $\mathbb{V}(\mathbb{D}^{t}_{n})$ and $\mathbb{E}(\mathbb{D}^{t}_{n})$ respectively, where $\mathbb{V}(\mathbb{D}^{t}_{n})=\{p_{\pounds},q_{\pounds}, r_{\pounds},s_{\pounds}|1\leq \pounds \leq n\}$ and $\mathbb{E}(\mathbb{D}^{t}_{n})=\{p_{\pounds}q_{\pounds},p_{\pounds}p_{\pounds+1}, q_{\pounds}q_{\pounds+1}, r_{\pounds}q_{\pounds},r_{\pounds}q_{\pounds+1}, r_{\pounds}s_{\pounds}|1\leq \pounds \leq n\}$. It comprises of $n$ $3$-sided faces, $n$ pendant edges, $n$ $4$-sided faces, and an $n$-sided face (see Figure 1). The graph $\mathbb{D}^{t}_{n}$ is allied to the Prism graph $\mathbb{D}_{n}$ in the sense that, it can be acquired from the Prism graph by including new vertices $\{r_{\pounds},s_{\pounds}|1\leq \pounds \leq n\}$ and edges $\{r_{\pounds}q_{\pounds},r_{\pounds}q_{\pounds+1}, r_{\pounds}s_{\pounds}|1\leq \pounds \leq n\}$ in $\mathbb{D}_{n}$ as follows:
\begin{itemize}
  \item Placing new vertices $r_{\pounds}$, between the edges $q_{\pounds}q_{\pounds+1}$ $(1\leq \pounds \leq n)$.
  \item Again join the vertices $q_{\pounds}$ and $q_{\pounds+1}$.
  \item Join the vertices $r_{\pounds}$ and $s_{\pounds}$, in order to obtain the $n$ pendant edges.
\end{itemize}

\begin{center}
  \begin{figure}[h!]
  \centering
  \includegraphics[width=3in]{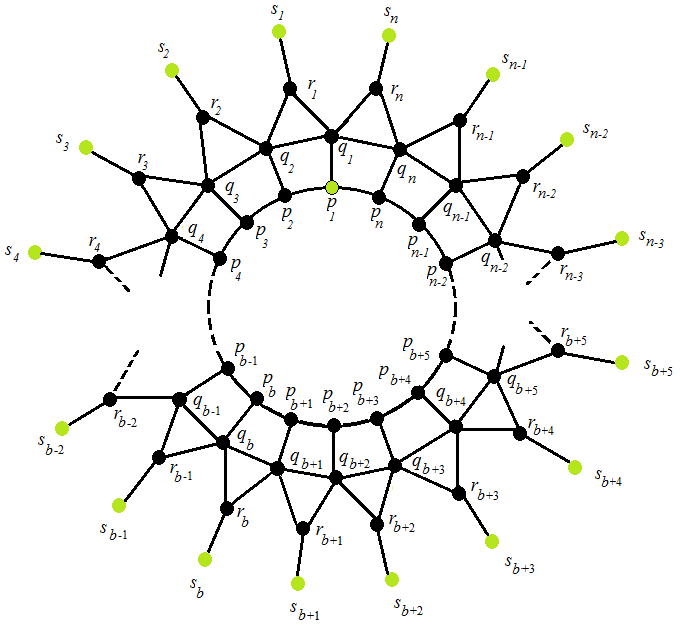}
  \caption{The Prism Allied Graph $\mathbb{D}^{t}_{n}$, for $n\geq4$.}\label{p1}
\end{figure}
\end{center}

For our smooth purpose, we refer to the cycle brought forth by the arrangement of vertices $\{q_{\pounds}:1 \leqslant \pounds \leqslant n\}$ and $\{p_{\pounds}:1 \leqslant \pounds \leqslant n\}$ in the graph, $\mathbb{D}^{t}_{n}$ as the $q$ and $p$-cycle respectively, the arrangement of vertices $\{r_{\pounds}:1 \leqslant \pounds \leqslant n\}$ and $\{s_{\pounds}:1 \leqslant \pounds \leqslant n\}$, in the graph, $\mathbb{D}^{t}_{n}$ as the set of outer and pendant vertices respectively. For our convenience, we consider $s_{1}=s_{n+1}$, $r_{1}=r_{n+1}$, $q_{1}=q_{n+1}$, and $p_{1}=p_{n+1}$. In the present working section, we obtain that the least possible cardinality for the MMG $M_{G}^{m}$ of the Prism allied graph $\mathbb{D}^{t}_{n}$ is $n+1$. We also find that the MMG $M_{G}^{m}$ for the Prism allied graph $\mathbb{D}^{t}_{n}$ is independent. Now, in order to get the exact MMD of graph $\mathbb{D}^{t}_{n}$, we need the following three Lemmas:

\begin{lem}
The set of outer vertices $\{s_{\pounds}| 1\leq \pounds \leq n\}\subset M_{G}^{m}$, where $M_{G}^{m}$ is a MMG for the Prism allied graph $\mathbb{D}^{t}_{n}$.
\end{lem}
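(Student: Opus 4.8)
The plan is to show each pendant vertex $s_{\pounds}$ is \emph{forced} into any mixed metric generator, by exhibiting for each $\pounds$ a pair of distinct elements of $\mathbb{V}(\mathbb{D}^{t}_{n})\cup\mathbb{E}(\mathbb{D}^{t}_{n})$ that is resolved \emph{only} by $s_{\pounds}$. The natural candidate pair is the pendant vertex $s_{\pounds}$ itself together with its incident pendant edge $\varepsilon=r_{\pounds}s_{\pounds}$. First I would record the elementary distance fact that for any vertex $x\neq s_{\pounds}$, every shortest $x$–$s_{\pounds}$ path must pass through $r_{\pounds}$ (since $s_{\pounds}$ has degree one with unique neighbor $r_{\pounds}$), so $d_{\mathbb{D}^{t}_{n}}(x,s_{\pounds})=d_{\mathbb{D}^{t}_{n}}(x,r_{\pounds})+1$, whereas $d_{\mathbb{D}^{t}_{n}}(x,\varepsilon)=\min\{d(x,r_{\pounds}),d(x,s_{\pounds})\}=d(x,r_{\pounds})$. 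Hence $d(x,s_{\pounds})-d(x,\varepsilon)=1\neq0$ would be the wrong comparison; instead the correct observation is that $d(x,s_{\pounds})$ and $d(x,\varepsilon)$ differ, so actually a generic vertex \emph{does} resolve this pair. I must therefore choose the forcing pair more carefully.

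The corrected approach: consider the pendant \emph{vertex} $s_{\pounds}$ and the outer \emph{vertex} $r_{\pounds}$ — no; rather, the cleanest forcing pair is the pendant edge $r_{\pounds}s_{\pounds}$ paired against the edge $r_{\pounds}q_{\pounds}$ (or $r_{\pounds}q_{\pounds+1}$), both of which are incident to $r_{\pounds}$. For any vertex $x$ not equal to $s_{\pounds}$, one has $d(x,r_{\pounds}s_{\pounds})=d(x,r_{\pounds})$ because the pendant endpoint $s_{\pounds}$ is strictly farther from $x$ than $r_{\pounds}$. On the other hand $d(x,r_{\pounds}q_{\pounds})=\min\{d(x,r_{\pounds}),d(x,q_{\pounds})\}$. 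So the key step is to verify that for \emph{every} vertex $x\in\mathbb{V}(\mathbb{D}^{t}_{n})\setminus\{s_{\pounds}\}$ we have $d(x,r_{\pounds})\le d(x,q_{\pounds})$ and $d(x,r_{\pounds})\le d(x,q_{\pounds+1})$, at least for one of the two choices of the second edge, so that $d(x,r_{\pounds}q_{\pounds})=d(x,r_{\pounds})=d(x,r_{\pounds}s_{\pounds})$, i.e. $x$ fails to resolve the pair; while $d(s_{\pounds},r_{\pounds}s_{\pounds})=0\neq 1=d(s_{\pounds},r_{\pounds}q_{\pounds})$, so $s_{\pounds}$ does resolve it. This dichotomy — only $s_{\pounds}$ resolves — forces $s_{\pounds}\in M_{G}^{m}$, and since $\pounds$ was arbitrary, $\{s_{\pounds}:1\le\pounds\le n\}\subseteq M_{G}^{m}$.

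Concretely I would carry this out by a short case analysis on the location of $x$ among the four orbits $\{p_j\}$, $\{q_j\}$, $\{r_j\}$, $\{s_j\}$, using the explicit adjacencies: $r_j$ sits "one step out" from the $q$-cycle between $q_j$ and $q_{j+1}$, so distances from $r_j$ to the rest of the graph are governed by $d(r_j,\cdot)=1+\min\{d(q_j,\cdot),d(q_{j+1},\cdot)\}$ for targets on the $q$/$p$ side, and $r$–$r$ distances run along an $n$-cycle-like pattern on the outer vertices. The main obstacle is the bookkeeping: I need to confirm that in no case does some vertex $x$ have $d(x,q_{\pounds})<d(x,r_{\pounds})$ \emph{and} $d(x,q_{\pounds+1})<d(x,r_{\pounds})$ simultaneously, because that is exactly the configuration in which $x$ would distinguish $r_{\pounds}s_{\pounds}$ from both candidate edges at $r_{\pounds}$; ruling this out (perhaps by noting $d(x,r_{\pounds})\le 1+\min\{d(x,q_{\pounds}),d(x,q_{\pounds+1})\}$ always, with equality structure that prevents the bad case unless $x=s_{\pounds}$) is the crux. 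Once that inequality is nailed down the rest is immediate, and the same argument run for every index $\pounds$ yields the lemma.
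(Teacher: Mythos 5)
Your overall strategy (force each $s_{\pounds}$ by exhibiting a pair of elements that only $s_{\pounds}$ resolves) is the right one, and the distance facts in your first paragraph are correct, but the pair you finally settle on does not work. For the pair consisting of the edges $r_{\pounds}s_{\pounds}$ and $r_{\pounds}q_{\pounds}$, the vertex $q_{\pounds}$ itself distinguishes them: $d(q_{\pounds},r_{\pounds}q_{\pounds})=0$ while $d(q_{\pounds},r_{\pounds}s_{\pounds})=1$; so do $p_{\pounds}$, $q_{\pounds-1}$, and in fact every vertex strictly closer to $q_{\pounds}$ than to $r_{\pounds}$. Hence the inequality you hope to verify, $d(x,r_{\pounds})\le d(x,q_{\pounds})$ for all $x\neq s_{\pounds}$, is false, and symmetrically for $r_{\pounds}q_{\pounds+1}$ with $q_{\pounds+1}$ in place of $q_{\pounds}$. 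The fallback ``for each $x$ at least one of the two choices of second edge is unresolved by $x$'' does not repair this: to force $s_{\pounds}$ into \emph{every} MMG you need one fixed pair that no vertex other than $s_{\pounds}$ resolves, whereas with your two candidate pairs a generator containing, say, $q_{\pounds}$ and $q_{\pounds+1}$ resolves both of them without $s_{\pounds}$. So the condition you identify as the crux (ruling out $d(x,q_{\pounds})<d(x,r_{\pounds})$ and $d(x,q_{\pounds+1})<d(x,r_{\pounds})$ simultaneously) is not the relevant one, and even if it held it would not yield the lemma.

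The fix is a one-line change and is exactly the paper's argument: compare the \emph{vertex} $r_{\pounds}$ with the pendant \emph{edge} $r_{\pounds}s_{\pounds}$. Your own computation shows that for every vertex $x\neq s_{\pounds}$ one has $d(x,s_{\pounds})=d(x,r_{\pounds})+1$, hence $d(x,r_{\pounds}s_{\pounds})=\min\{d(x,r_{\pounds}),d(x,s_{\pounds})\}=d(x,r_{\pounds})$, which is precisely the distance from $x$ to the vertex $r_{\pounds}$; thus no vertex other than $s_{\pounds}$ separates this vertex--edge pair, while $d(s_{\pounds},r_{\pounds})=1\neq 0=d(s_{\pounds},r_{\pounds}s_{\pounds})$. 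Consequently any mixed metric generator omitting $s_{\pounds}$ gives $\Im_{M}(r_{\pounds}|M_{G}^{m})=\Im_{M}(r_{\pounds}s_{\pounds}|M_{G}^{m})$, a contradiction, and since $\pounds$ is arbitrary the lemma follows. You correctly rejected the nearby pair $(s_{\pounds},\,r_{\pounds}s_{\pounds})$, but then moved to an edge--edge pair instead of the vertex--edge pair $(r_{\pounds},\,r_{\pounds}s_{\pounds})$; that substitution is where the proposal breaks.
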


\begin{proof}
For the inconsistency, we suppose that the MMG $M_{G}^{m}$, does not contain at least one vertex from the set $\{s_{\pounds}| 1\leq \pounds \leq n\}$. Without loss of generality, we suppose that $s_{\pounds}\not\in M_{G}^{m}$, for any $\pounds$. At that point, we have $\Im_{M}(r_{\pounds}|M_{G}^{m})=\Im_{M}(r_{\pounds}s_{\pounds}|M_{G}^{m})$, $\Im_{M}(q_{\pounds}|M_{G}^{m})=\Im_{M}(r_{\pounds}q_{\pounds}|M_{G}^{m})$, and $\Im_{M}(q_{\pounds+1}|M_{G}^{m})=\Im_{M}(r_{\pounds}q_{\pounds+1}|M_{G}^{m})$, a contradiction.

\end{proof}

\begin{lem}
Let $P=\{p_{\pounds}| 1\leq \pounds \leq n\}$ and $M_{G}^{m}$ be any mixed resolving generator for the Prism allied graph $\mathbb{D}^{t}_{n}$. Then, $P\cap M_{G}^{m}\neq \emptyset$.
\end{lem}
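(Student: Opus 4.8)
The plan is to argue by contradiction, in the spirit of the previous lemma. Suppose $M_{G}^{m}$ is a MMG for $\mathbb{D}^{t}_{n}$ with $P\cap M_{G}^{m}=\emptyset$, so that every vertex of $M_{G}^{m}$ is a $q$-cycle vertex $q_{\pounds}$, an outer vertex $r_{\pounds}$, or a pendant vertex $s_{\pounds}$. I will then exhibit a vertex and an edge of $\mathbb{D}^{t}_{n}$ with identical mixed metric codes with respect to $M_{G}^{m}$, contradicting the assumption that $M_{G}^{m}$ is a mixed metric generator.

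The key step is the distance identity
\begin{equation*}
d_{\mathbb{D}^{t}_{n}}(w,p_{\pounds})=d_{\mathbb{D}^{t}_{n}}(w,q_{\pounds})+1\qquad\text{for all } w\in\mathbb{V}(\mathbb{D}^{t}_{n})\setminus P \text{ and all }\pounds .
\end{equation*}
The reason is structural: every edge with exactly one endpoint in $P$ is a spoke $p_{m}q_{m}$, so along any shortest path from a vertex $w\notin P$ to $p_{\pounds}$ the \emph{first} entry into $P$ happens along some spoke, say from $q_{m}$ to $p_{m}$; the path then has length at least $d(w,q_{m})+1+d(p_{m},p_{\pounds})$. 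Since the $p$-cycle and the $q$-cycle are isometrically embedded $n$-cycles (the vertices $r_{\pounds}$ and $s_{\pounds}$ create no shortcut between two $q$-vertices, nor between two $p$-vertices), we have $d(p_{m},p_{\pounds})=d_{C}(m,\pounds)$ and $d(w,q_{m})+d_{C}(m,\pounds)\geq d(w,q_{\pounds})$, where $d_{C}$ is the cyclic distance; hence the length is at least $d(w,q_{\pounds})+1$, and this bound is attained by the path $w\rightarrow q_{\pounds}\rightarrow p_{\pounds}$. Equivalently, one can verify the identity by the short direct computation in the three cases $w=q_{j}$, $w=r_{j}$, $w=s_{j}$.

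Granting the identity, for every $w\notin P$ the distance to the spoke edge $p_{\pounds}q_{\pounds}$ is $d(w,p_{\pounds}q_{\pounds})=\min\{d(w,p_{\pounds}),d(w,q_{\pounds})\}=\min\{d(w,q_{\pounds})+1,d(w,q_{\pounds})\}=d(w,q_{\pounds})$. Thus, under the standing assumption $P\cap M_{G}^{m}=\emptyset$, every vertex of $M_{G}^{m}$ is equidistant from the vertex $q_{1}$ and the edge $p_{1}q_{1}$, so $\Im_{M}(q_{1}|M_{G}^{m})=\Im_{M}(p_{1}q_{1}|M_{G}^{m})$, which is the required contradiction. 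Therefore $P\cap M_{G}^{m}\neq\emptyset$.

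The part I expect to require the most care is the distance identity, namely ruling out a shortest path that exits the $p$-cycle, wanders through the outer/pendant vertices or around the $q$-cycle, and re-enters the $p$-cycle through a cheaper spoke. This is morally immediate because the $n$ spokes $\{p_{\pounds}q_{\pounds}:1\leq\pounds\leq n\}$ form an edge cut separating the $p$-cycle from the rest of $\mathbb{D}^{t}_{n}$; turning that observation into a clean argument (either via the cut or via the three-case check) is the only non-routine ingredient, after which the contradiction via $q_{1}$ and $p_{1}q_{1}$ is immediate.
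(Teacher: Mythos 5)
Your proposal is correct and follows essentially the same route as the paper: assuming $P\cap M_{G}^{m}=\emptyset$, you show the vertex $q_{\pounds}$ and the incident spoke edge $p_{\pounds}q_{\pounds}$ receive identical mixed metric codes, exactly the contradiction the paper invokes. The only difference is that you supply the distance identity $d(w,p_{\pounds})=d(w,q_{\pounds})+1$ for $w\notin P$ (via the spoke edge cut), which the paper leaves implicit.
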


\begin{proof}
  Suppose on the contrary that $P\cap M_{G}^{m}=\emptyset$ i.e., for any $\pounds$, $p_{\pounds}\not\in M_{G}^{m}$. Then, we have $\Im_{M}(q_{\pounds}|M_{G}^{m})=\Im_{M}(p_{\pounds}q_{\pounds}|M_{G}^{m})$, a contradiction.
\end{proof}

In the accompanying Lemma, we show that the cardinality of any mixed resolving generator for the Prism allied graph $\mathbb{D}^{t}_{n}$ is greater than or equals to $n+1$ i.e., $|M_{G}^{m}|\geq n+1$.

\begin{lem}
For the Prism allied graph $\mathbb{D}^{t}_{n}$ and $n\geq4$, we have $mdim(\mathbb{D}^{t}_{n})\geq n+1$.
\end{lem}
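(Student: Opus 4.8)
The plan is to establish the lower bound $mdim(\mathbb{D}^{t}_{n}) \geq n+1$ by combining the structural information from the previous two Lemmas with a counting argument on the pendant vertices. By Lemma~1 we already know that every MMG must contain all $n$ outer vertices $\{s_{\pounds} : 1 \leq \pounds \leq n\}$, so $|M_{G}^{m}| \geq n$. Thus it suffices to show that the set $S = \{s_{\pounds} : 1 \leq \pounds \leq n\}$ alone cannot be a MMG; then any MMG must contain at least one more vertex, forcing $|M_{G}^{m}| \geq n+1$. So I would argue by contradiction: suppose $M_{G}^{m} = S$ exactly (or more carefully, suppose $|M_{G}^{m}| = n$, which by Lemma~1 forces $M_{G}^{m} = S$).

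First I would record the relevant distances from a pendant vertex $s_{j}$ to the various elements of $\mathbb{D}^{t}_{n}$. Since $s_{j}$ hangs off $r_{j}$, and $r_{j}$ is adjacent to $q_{j}$ and $q_{j+1}$, one has $d(s_{j}, r_{j}) = 1$, $d(s_{j}, q_{j}) = d(s_{j}, q_{j+1}) = 2$, and distances to the $p$-cycle and to the remaining $q$-, $r$-, $s$-vertices grow by travelling around the $q$-cycle. The key observation is that the pendant vertices all sit at the ``outside'' of the graph and are therefore poorly placed to distinguish elements on the inner $p$-cycle: in particular, for each $\pounds$ the vertex $p_{\pounds}$ and the edge $p_{\pounds}q_{\pounds}$ satisfy $d(s_{j}, p_{\pounds}) = d(s_{j}, q_{\pounds}) + 1 = d(s_{j}, p_{\pounds}q_{\pounds}) + 1$ for every $j$ — wait, more precisely $d(s_j, p_\pounds q_\pounds) = \min\{d(s_j,p_\pounds), d(s_j,q_\pounds)\} = d(s_j, q_\pounds)$, while $d(s_j, p_\pounds) = d(s_j,q_\pounds)+1$, so these two ARE distinguished; the genuine collision to exploit is of the type already used in Lemma~2, namely $\Im_{M}(q_{\pounds}|S) = \Im_{M}(p_{\pounds}q_{\pounds}|S)$, which holds precisely because $S \cap P = \emptyset$. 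Indeed Lemma~2 shows that \emph{any} MMG must meet $P$; but $S \cap P = \emptyset$, so $S$ is not a MMG.

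Hence the argument collapses to a short syllogism: if $|M_{G}^{m}| = n$ then by Lemma~1 $M_{G}^{m} \supseteq S$ and by cardinality $M_{G}^{m} = S$, so $M_{G}^{m} \cap P = \emptyset$, contradicting Lemma~2. Therefore $|M_{G}^{m}| \geq n+1$, i.e. $mdim(\mathbb{D}^{t}_{n}) \geq n+1$. I expect the only subtlety — the ``main obstacle'' — to be making sure the two cited Lemmas are logically independent enough that this pigeonhole step is valid, i.e. that Lemma~2's conclusion genuinely rules out $S$ itself rather than merely some proper subset; since Lemma~2 is stated for an arbitrary mixed resolving generator and its proof only uses $P \cap M_{G}^{m} = \emptyset$, this is fine, and no further distance computations beyond those already appearing in Lemmas~1 and~2 are needed. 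I would also remark that the bound is clean and does not require the $n \geq 4$ hypothesis beyond what is inherited from the ambient setup, though I would keep the hypothesis for consistency with the later matching upper-bound construction.
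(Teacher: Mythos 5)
Your proposal is correct and follows essentially the same route as the paper: Lemma~1 forces any MMG of cardinality $n$ to equal the set of pendant vertices $\{s_{\pounds}\}$, which is disjoint from $P$ and hence contradicts Lemma~2, giving $mdim(\mathbb{D}^{t}_{n})\geq n+1$. The brief detour about distances from $s_j$ is unnecessary but harmless, since you correctly discard it and rely only on the two cited lemmas, exactly as the paper does.
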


\begin{proof}
On contrary, we suppose that the cardinality of the mixed resolving generator $M_{G}^{m}$ of the Prism allied graph $\mathbb{D}^{t}_{n}$ is equals to $n$ i.e., $\beta_{M}(\mathbb{D}^{t}_{n})=n$. Then, on combining Lemma 1 and 2, we get contradiction as the cardinality of the set $\{s_{\pounds}| 1\leq \pounds \leq n\}$ is $n$. So, we must have $\beta_{M}(\mathbb{D}^{t}_{n})\geq n+1$.
\end{proof}

Now, we are ready to obtain the exact mixed metric dimension for the Prism allied graph $\mathbb{D}^{t}_{n}$. For this, we have the following important result:\\\\
\textbf{Theorem 5.}
{\it For the Prism allied graph $\mathbb{D}^{t}_{n}$, we have $mdim(\mathbb{D}^{t}_{n})=n+1$, $\forall$ $n\geq4$.}

\begin{proof}
By Lemma 3, we have $mdim(\mathbb{D}^{t}_{n})\geq n+1$. Now, in order to complete the proof of the theorem, we have to show that $mdim(\mathbb{D}^{t}_{n})\leq n+1$. For this, suppose $M_{G}^{m}= \{p_{1}, s_{1}, s_{2},..., s_{n-1}, s_{n}\} \subset \mathbb{V}(\mathbb{D}^{t}_{n})$ (for the location of these vertices, see Figure 1 (vertices in green color)). We will show that $M_{G}^{m}$ is the MMG for the Prism allied graph $\mathbb{D}^{t}_{n}$. By total enumeration, it can be easily checked that the set $M_{G}^{m}$ is the MMG for the Prism allied graph $\mathbb{D}^{t}_{n}$, when $n=4,5$. Now, for $n\geq6$, we consider the following two cases regarding the positive integer $n$ (i.e., when $n\equiv0(modn)$ and $n\equiv1(mod2)$).\\
\vspace{-0.4mm}

\textbf{Case(\rom{1})} $n\equiv0(mod2)$.\\
In this case, $n$ can be written as $n = 2\aleph$, where $\aleph \in \mathbb{N}$ and $\aleph \geq 3$. Let $M_{G}^{\ast} = \{p_{1}, s_{1},s_{2}, s_{\aleph+1}, s_{\aleph+2}\}\\ \subset \mathbb{V}(\mathbb{D}^{t}_{n})$. Now, to figure out that $M_{G}^{\ast}$ is the MMG for the Prism allied graph $\mathbb{D}^{t}_{n}$, we consign the mixed metric codes for each vertex and each edge of the graph $\mathbb{D}^{t}_{n}$ regarding $M_{G}^{\ast}$ ($b=\aleph$ in Figure 1 and 2). \\

Now, the mixed metric codes for the vertices $\{\upsilon=p_{\pounds}, q_{\pounds}, r_{\pounds}, s_{\pounds}| \pounds=1,2,3,...,n\}$ regarding the set $M_{G}^{\ast}$ are shown below in the following four tables respectively.
\vspace{-0.4mm}
\begin{center}
 \begin{tabular}{|m{16.0em}|m{22.0em}|}
 \hline
  $\Im_{M}(\upsilon |M_{G}^{\ast})$ & $M_{G}^{\ast} = \{p_{1}, s_{1},s_{2}, s_{\aleph+1}, s_{\aleph+2}\}$ \\
 \hline
 \vspace{-0.4mm}
 $\Im_{M}(p_{\pounds}|M_{G}^{\ast})$:($\pounds=1$)                       & $(\pounds-1,3,4,\aleph+2,\aleph+1)$ \\
 \hline
 $\Im_{M}(p_{\pounds}|M_{G}^{\ast})$:($\pounds=2$)                       & $(\pounds-1,\pounds+1,3,\aleph-\pounds+4,\aleph+2)$ \\
 \hline
 $\Im_{M}(p_{\pounds}|M_{G}^{\ast})$:($3 \leq \pounds \leq \aleph+1 $)& $(\pounds-1,\pounds+1,\pounds,\aleph-\pounds+4,\aleph-\pounds+5)$ \\
 \hline
 $\Im_{M}(p_{\pounds}|M_{G}^{\ast})$:($\pounds=\aleph+2$)             & $(2\aleph-\pounds+1,2\aleph-\pounds+4,\pounds,\pounds-\aleph+1, \aleph-\pounds+5)$ \\
 \hline
 $\Im_{M}(p_{\pounds}|M_{G}^{\ast})$:($\aleph+3 \leq \pounds \leq 2\aleph$)   & $(2\aleph-\pounds+1,2\aleph-\pounds+4,2\aleph-\pounds+5,\pounds-\aleph+1,\pounds-\aleph)$ \\
 \hline
 \end{tabular}
 \end{center}
\vspace{-0.4mm}
\begin{center}
 \begin{tabular}{|m{16.0em}|m{22.0em}|}
 \hline
  $\Im_{M}(\upsilon |M_{G}^{\ast})$ & $M_{G}^{\ast} = \{p_{1}, s_{1},s_{2}, s_{\aleph+1}, s_{\aleph+2}\}$ \\
 \hline
 \vspace{-0.4mm}
 $\Im_{M}(q_{\pounds}|M_{G}^{\ast})$:($\pounds=1$)                       & $(\pounds,2,3,\aleph+1,\aleph)$ \\
 \hline
 $\Im_{M}(q_{\pounds}|M_{G}^{\ast})$:($\pounds=2$)                       & $(\pounds,\pounds,2,\aleph-\pounds+3,\aleph+1)$ \\
 \hline
 $\Im_{M}(q_{\pounds}|M_{G}^{\ast})$:($3 \leq \pounds \leq \aleph+1$) & $(\pounds,\pounds,\pounds-1,\aleph-\pounds+3,\aleph-\pounds+4)$ \\
 \hline
 $\Im_{M}(q_{\pounds}|M_{G}^{\ast})$:($\pounds=\aleph+2$)             & $(2\aleph-\pounds+2,2\aleph-\pounds+3,\pounds-1,\pounds-\aleph, \aleph-\pounds+4)$ \\
 \hline
 $\Im_{M}(q_{\pounds}|M_{G}^{\ast})$:($\aleph+3 \leq \pounds \leq 2\aleph$)   & $(2\aleph-\pounds+2,2\aleph-\pounds+3,2\aleph-\pounds+4,\pounds-\aleph,\pounds-\aleph-1)$ \\
 \hline
 \end{tabular}
 \end{center}
\vspace{-0.4mm}
\begin{center}
 \begin{tabular}{{|m{16.0em}|m{22.0em}|}}
 \hline
  $\Im_{M}(\upsilon |M_{G}^{\ast})$ & $M_{G}^{\ast} = \{p_{1}, s_{1},s_{2}, s_{\aleph+1}, s_{\aleph+2}\}$ \\
 \hline
 \vspace{-0.4mm}
 $\Im_{M}(r_{\pounds}|M_{G}^{\ast})$: ($\pounds=1$)                       & $(\pounds+1,1,3,\aleph-\pounds+3,\aleph+1)$ \\
 \hline
 $\Im_{M}(r_{\pounds}|M_{G}^{\ast})$: ($\pounds=2$)                       & $(\pounds+1,\pounds+1,1,\aleph-\pounds+3,\aleph-\pounds+4)$ \\
 \hline
 $\Im_{M}(r_{\pounds}|M_{G}^{\ast})$: ($3 \leq \pounds \leq \aleph$)   & $(\pounds+1,\pounds+1,\pounds,\aleph-\pounds+3,\aleph-\pounds+4)$ \\
 \hline
 $\Im_{M}(r_{\pounds}|M_{G}^{\ast})$: ($\pounds=\aleph+1$)             & $(2\aleph-\pounds+2,\pounds+1,\pounds,1,\aleph-\pounds+4)$ \\
 \hline
 $\Im_{M}(r_{\pounds}|M_{G}^{\ast})$: ($\pounds=\aleph+2$)             & $(2\aleph-\pounds+2,2\aleph-\pounds+3,\pounds,\pounds-\aleph+1,1)$ \\
 \hline
 $\Im_{M}(r_{\pounds}|M_{G}^{\ast})$: ($\aleph+3 \leq \pounds \leq 2\aleph$)   & $(2\aleph-\pounds+2,2\aleph-\pounds+3,2\aleph-\pounds+4,\pounds-\aleph+1,\pounds-\aleph)$ \\
 \hline
 \end{tabular}
 \end{center}
 \vspace{-0.4mm}
\begin{center}
 \begin{tabular}{{|m{16.0em}|m{22.0em}|}}
 \hline
  $\Im_{M}(\upsilon |M_{G}^{\ast})$ & $M_{G}^{\ast} = \{p_{1}, s_{1},s_{2}, s_{\aleph+1}, s_{\aleph+2}\}$ \\
 \hline
 \vspace{-0.4mm}
 $\Im_{M}(s_{\pounds}|M_{G}^{\ast})$: ($\pounds=1$)                       & $(\pounds+2,0,4,\aleph-\pounds+4,\aleph+2)$ \\
 \hline
 $\Im_{M}(s_{\pounds}|M_{G}^{\ast})$: ($\pounds=2$)                       & $(\pounds+2,\pounds+2,0,\aleph-\pounds+4,\aleph-\pounds+5)$ \\
 \hline
 $\Im_{M}(s_{\pounds}|M_{G}^{\ast})$: ($3 \leq \pounds \leq \aleph$)   & $(\pounds+2,\pounds+2,\pounds+1,\aleph-\pounds+4,\aleph-\pounds+5)$ \\
 \hline
 $\Im_{M}(s_{\pounds}|M_{G}^{\ast})$: ($\pounds=\aleph+1$)             & $(2\aleph-\pounds+3,2\aleph-\pounds+4,\pounds+1,0,\aleph-\pounds+5)$ \\
 \hline
 $\Im_{M}(s_{\pounds}|M_{G}^{\ast})$: ($\pounds=\aleph+2$)             & $(2\aleph-\pounds+3,2\aleph-\pounds+4,2\aleph-\pounds+5,\pounds-\aleph+2,0)$ \\
 \hline
 $\Im_{M}(s_{\pounds}|M_{G}^{\ast})$: ($\aleph+3 \leq \pounds \leq 2\aleph$)   & $(2\aleph-\pounds+3,2\aleph-\pounds+4,2\aleph-\pounds+5,\pounds-\aleph+2,\pounds-\aleph+1)$ \\
 \hline
 \end{tabular}
 \end{center}
\vspace{-0.4mm}
and the mixed metric codes for the edges $\{\epsilon=p_{\pounds}p_{\pounds+1}, p_{\pounds}q_{\pounds}, q_{\pounds}q_{\pounds+1}, q_{\pounds}r_{\pounds}, r_{\pounds}q_{\pounds+1}, r_{\pounds}s_{\pounds}| \pounds=1,2,3,...,n\}$ regarding the set $M_{G}^{\ast}$ are shown in the tables below, respectively.
\vspace{-0.4mm}
\begin{center}
 \begin{tabular}{{|m{16.0em}|m{22.0em}|}}
 \hline
  $\Im_{M}(\epsilon |M_{G}^{\ast})$ & $M_{G}^{\ast} = \{p_{1}, s_{1},s_{2}, s_{\aleph+1}, s_{\aleph+2}\}$ \\
 \hline
 \vspace{-0.4mm}
 $\Im_{M}(p_{\pounds}p_{\pounds+1}|M_{G}^{\ast})$: ($ \pounds=1$)                    & $(\pounds-1,3,3,\aleph-\pounds+3,\aleph+1)$ \\
 \hline
 $\Im_{M}(p_{\pounds}p_{\pounds+1}|M_{G}^{\ast})$: ($\pounds=2$)                     & $(\pounds-1,\pounds+1,3,\aleph-\pounds+3,\aleph-\pounds+4)$ \\
 \hline
 $\Im_{M}(p_{\pounds}p_{\pounds+1}|M_{G}^{\ast})$: ($3 \leq \pounds \leq \aleph$) & $(\pounds-1,\pounds+1,\pounds,\aleph-\pounds+3,\aleph-\pounds+4)$ \\
 \hline
 $\Im_{M}(p_{\pounds}p_{\pounds+1}|M_{G}^{\ast})$: ($\pounds=\aleph+1$)        & $(2\aleph-\pounds,2\aleph-\pounds+3,\pounds,3,\aleph-\pounds+4)$ \\
 \hline
 $\Im_{M}(p_{\pounds}p_{\pounds+1}|M_{G}^{\ast})$: ($\pounds=\aleph+2$)     & $(2\aleph-\pounds,2\aleph-\pounds+3,2\aleph-\pounds+4,\pounds-\aleph+1,3)$ \\
 \hline
 $\Im_{M}(p_{\pounds}p_{\pounds+1}|M_{G}^{\ast})$: ($\aleph+3 \leq \pounds \leq 2\aleph$)     & $(2\aleph-\pounds,2\aleph-\pounds+3,2\aleph-\pounds+4,\pounds-\aleph+1,\pounds-\aleph)$ \\
 \hline
 \end{tabular}
 \end{center}
\vspace{-0.4mm}
\begin{center}
 \begin{tabular}{{|m{16.0em}|m{22.0em}|}}
 \hline
  $\Im_{M}(\epsilon |M_{G}^{\ast})$ & $M_{G}^{\ast} = \{p_{1}, s_{1},s_{2}, s_{\aleph+1}, s_{\aleph+2}\}$ \\
 \hline
 \vspace{-0.4mm}
 $\Im_{M}(p_{\pounds}q_{\pounds}|M_{G}^{\ast})$: ($ \pounds=1$)                    & $(\pounds-1,2,3,\aleph+1,\aleph)$ \\
 \hline
 $\Im_{M}(p_{\pounds}q_{\pounds}|M_{G}^{\ast})$: ($\pounds=2$)                     & $(\pounds-1,\pounds,2,\aleph-\pounds+3,\aleph+1)$ \\
 \hline
 $\Im_{M}(p_{\pounds}q_{\pounds}|M_{G}^{\ast})$: ($3 \leq \pounds \leq \aleph+1$) & $(\pounds-1,\pounds,\pounds-1,\aleph-\pounds+3,\aleph-\pounds+4)$ \\
 \hline
 $\Im_{M}(p_{\pounds}q_{\pounds}|M_{G}^{\ast})$: ($\pounds=\aleph+2$)     & $(2\aleph-\pounds+1,2\aleph-\pounds+3,\pounds-1,\pounds-\aleph,\aleph-\pounds+4)$ \\
 \hline
 $\Im_{M}(p_{\pounds}q_{\pounds}|M_{G}^{\ast})$: ($\aleph+3 \leq \pounds \leq 2\aleph$)     & $(2\aleph-\pounds+1,2\aleph-\pounds+3,2\aleph-\pounds+4,\pounds-\aleph,\pounds-\aleph-1)$ \\
 \hline
 \end{tabular}
 \end{center}
\vspace{-0.4mm}
\begin{center}
 \begin{tabular}{{|m{16.0em}|m{22.0em}|}}
 \hline
  $\Im_{M}(\epsilon |M_{G}^{\ast})$ & $M_{G}^{\ast} = \{p_{1}, s_{1},s_{2}, s_{\aleph+1}, s_{\aleph+2}\}$ \\
 \hline
 \vspace{-0.4mm}
 $\Im_{M}(q_{\pounds}q_{\pounds+1}|M_{G}^{\ast})$: ($ \pounds=1$)                    & $(\pounds,2,2,\aleph-\pounds+2,\aleph)$ \\
 \hline
 $\Im_{M}(q_{\pounds}q_{\pounds+1}|M_{G}^{\ast})$: ($\pounds=2$)                     & $(\pounds,\pounds,2,\aleph-\pounds+2,\aleph-\pounds+3)$ \\
 \hline
 $\Im_{M}(q_{\pounds}q_{\pounds+1}|M_{G}^{\ast})$: ($3 \leq \pounds \leq \aleph$) & $(\pounds,\pounds,\pounds-1,\aleph-\pounds+2,\aleph-\pounds+3)$ \\
 \hline
 $\Im_{M}(q_{\pounds}q_{\pounds+1}|M_{G}^{\ast})$: ($\pounds=\aleph+1$)     & $(2\aleph-\pounds+1,2\aleph-\pounds+2,\pounds-1,2,\aleph-\pounds+3)$ \\
 \hline
 $\Im_{M}(q_{\pounds}q_{\pounds+1}|M_{G}^{\ast})$: ($\pounds=\aleph+2$)     & $(2\aleph-\pounds+1,2\aleph-\pounds+2,2\aleph-\pounds+3,\pounds-\aleph,2)$ \\
 \hline
 $\Im_{M}(q_{\pounds}q_{\pounds+1}|M_{G}^{\ast})$: ($\aleph+3 \leq \pounds \leq 2\aleph$)     & $(2\aleph-\pounds+1,2\aleph-\pounds+2,2\aleph-\pounds+3,\pounds-\aleph,\pounds-\aleph-1)$ \\
 \hline
 \end{tabular}
 \end{center}
\vspace{-0.4mm}
\begin{center}
 \begin{tabular}{{|m{16.0em}|m{22.0em}|}}
 \hline
  $\Im_{M}(\epsilon |M_{G}^{\ast})$ & $M_{G}^{\ast} = \{p_{1}, s_{1},s_{2}, s_{\aleph+1}, s_{\aleph+2}\}$ \\
 \hline
 \vspace{-0.4mm}
 $\Im_{M}(q_{\pounds}r_{\pounds}|M_{G}^{\ast})$: ($ \pounds=1$)                    & $(\pounds,1,3,\aleph+1,\aleph)$ \\
 \hline
 $\Im_{M}(q_{\pounds}r_{\pounds}|M_{G}^{\ast})$: ($\pounds=2$)                     & $(\pounds,\pounds,3,\aleph-\pounds+2,\aleph-\pounds+4)$ \\
 \hline
 $\Im_{M}(q_{\pounds}r_{\pounds}|M_{G}^{\ast})$: ($3 \leq \pounds \leq \aleph$) & $(\pounds,\pounds,\pounds-1,\aleph-\pounds+3,\aleph-\pounds+4)$ \\
 \hline
 $\Im_{M}(q_{\pounds}r_{\pounds}|M_{G}^{\ast})$: ($\pounds=\aleph+1$)     & $(\pounds,\pounds,\pounds-1,1,\aleph-\pounds+4)$ \\
 \hline
 $\Im_{M}(q_{\pounds}r_{\pounds}|M_{G}^{\ast})$: ($\pounds=\aleph+2$)     & $(2\aleph-\pounds+2,2\aleph-\pounds+3,\pounds-1,\pounds-\aleph,1)$ \\
 \hline
 $\Im_{M}(q_{\pounds}r_{\pounds}|M_{G}^{\ast})$: ($\aleph+3 \leq \pounds \leq 2\aleph$)     & $(2\aleph-\pounds+2,2\aleph-\pounds+3,2\aleph-\pounds+4,\pounds-\aleph,\pounds-\aleph-1)$ \\
 \hline
 \end{tabular}
 \end{center}
 \vspace{-0.4mm}
\begin{center}
 \begin{tabular}{{|m{16.0em}|m{22.0em}|}}
 \hline
  $\Im_{M}(\epsilon |M_{G}^{\ast})$ & $M_{G}^{\ast} = \{p_{1}, s_{1},s_{2}, s_{\aleph+1}, s_{\aleph+2}\}$ \\
 \hline
 \vspace{-0.4mm}
 $\Im_{M}(r_{\pounds}q_{\pounds+1}|M_{G}^{\ast})$: ($ \pounds=1$)                    & $(\pounds+1,1,2,\aleph-\pounds+2,\aleph+1)$ \\
 \hline
 $\Im_{M}(r_{\pounds}q_{\pounds+1}|M_{G}^{\ast})$: ($\pounds=2$)                     & $(\pounds+1,\pounds+1,1,\aleph-\pounds+2,\aleph-\pounds+3)$ \\
 \hline
 $\Im_{M}(r_{\pounds}q_{\pounds+1}|M_{G}^{\ast})$: ($3 \leq \pounds \leq \aleph$) & $(\pounds+1,\pounds+1,\pounds,\aleph-\pounds+2,\aleph-\pounds+3)$ \\
 \hline
 $\Im_{M}(r_{\pounds}q_{\pounds+1}|M_{G}^{\ast})$: ($\pounds=\aleph+1$)     & $(2\aleph-\pounds+1,2\aleph-\pounds+2,\pounds,1,\aleph-\pounds+3)$ \\
 \hline
 $\Im_{M}(r_{\pounds}q_{\pounds+1}|M_{G}^{\ast})$: ($\pounds=\aleph+2$)     & $(2\aleph-\pounds+1,2\aleph-\pounds+2,2\aleph-\pounds+3,\pounds-\aleph+1,1)$ \\
 \hline
 $\Im_{M}(r_{\pounds}q_{\pounds+1}|M_{G}^{\ast})$: ($\aleph+3 \leq \pounds \leq 2\aleph$)     & $(2\aleph-\pounds+1,2\aleph-\pounds+2,2\aleph-\pounds+3,\pounds-\aleph+1,\pounds-\aleph)$ \\
 \hline
 \end{tabular}
 \end{center}
 \vspace{-0.4mm}
\begin{center}
 \begin{tabular}{{|m{16.0em}|m{22.0em}|}}
 \hline
  $\Im_{M}(\epsilon |M_{G}^{\ast})$ & $M_{G}^{\ast} = \{p_{1}, s_{1},s_{2}, s_{\aleph+1}, s_{\aleph+2}\}$ \\
 \hline
 \vspace{-0.4mm}
 $\Im_{M}(r_{\pounds}s_{\pounds}|M_{G}^{\ast})$: ($ \pounds=1$)                    & $(\pounds+1,0,3,\aleph-\pounds+3,\aleph+1)$ \\
 \hline
 $\Im_{M}(r_{\pounds}s_{\pounds}|M_{G}^{\ast})$: ($\pounds=2$)                     & $(\pounds+1,\pounds+1,0,\aleph-\pounds+3,\aleph-\pounds+4)$ \\
 \hline
 $\Im_{M}(r_{\pounds}s_{\pounds}|M_{G}^{\ast})$: ($3 \leq \pounds \leq \aleph$) & $(\pounds+1,\pounds+1,\pounds,\aleph-\pounds+3,\aleph-\pounds+4)$ \\
 \hline
 $\Im_{M}(r_{\pounds}s_{\pounds}|M_{G}^{\ast})$: ($\pounds=\aleph+1$)     & $(2\aleph-\pounds+2,2\aleph-\pounds+3,\pounds,0,\aleph-\pounds+4)$ \\
 \hline
 $\Im_{M}(r_{\pounds}s_{\pounds}|M_{G}^{\ast})$: ($\pounds=\aleph+2$)     & $(2\aleph-\pounds+2,2\aleph-\pounds+3,2\aleph-\pounds+4,\pounds-\aleph+1,0)$ \\
 \hline
 $\Im_{M}(r_{\pounds}s_{\pounds}|M_{G}^{\ast})$: ($\aleph+3 \leq \pounds \leq 2\aleph$)     & $(2\aleph-\pounds+2,2\aleph-\pounds+3,2\aleph-\pounds+4,\pounds-\aleph+1,\pounds-\aleph)$ \\
 \hline
 \end{tabular}
 \end{center}

Now, from these mixed metric codes of the edges and the vertices of the Prism allied graph $\mathbb{D}^{t}_{n}$ concerning the set $M_{G}^{\ast}$, we ascertain that for $1\leq \pounds \leq n$ and $\pounds\neq 1,2, \aleph+1, \aleph+2$, $\Im_{M}(q_{\pounds}|M_{G}^{\ast})=\Im_{M}(r_{\pounds}q_{\pounds}|M_{G}^{\ast})$, $\Im_{M}(q_{\pounds+1}|M_{G}^{\ast})=\Im_{M}(r_{\pounds}q_{\pounds+1}|M_{G}^{\ast})$, and $\Im_{M}(r_{\pounds}|M_{G}^{\ast})=\Im_{M}(r_{\pounds}s_{\pounds}|M_{G}^{\ast})$. For the remaining mixed metric edges and vertices codes in $\mathbb{D}^{t}_{n}$, we find no two vertices or edges with the same mixed metric codes. For $\pounds=3,4,...,\aleph-1,\aleph,\aleph+2,\aleph+3,...,n$, we see that $\Im_{M}(q_{\pounds}|M_{G}^{\ast}\cup\{s_{\pounds}\})\neq\Im_{M}(r_{\pounds}q_{\pounds}|M_{G}^{\ast}\cup\{s_{\pounds}\})$, $\Im_{M}(q_{\pounds+1}|M_{G}^{\ast}\cup\{s_{\pounds}\})\neq\Im_{M}(r_{\pounds}q_{\pounds+1}|M_{G}^{\ast}\cup\{s_{\pounds}\})$, and $\Im_{M}(r_{\pounds}|M_{G}^{\ast}\cup\{s_{\pounds}\})\neq\Im_{M}(r_{\pounds}s_{\pounds}|M_{G}^{\ast}\cup\{s_{\pounds}\})$. From this, we obtain $\Im_{M}(q_{\pounds}|M_{G}^{m})\neq\Im_{M}(r_{\pounds}q_{\pounds}|M_{G}^{m})$, $\Im_{M}(q_{\pounds+1}|M_{G}^{m})\neq\Im_{M}(r_{\pounds}q_{\pounds+1}|M_{G}^{m})$, and $\Im_{M}(r_{\pounds}|M_{G}^{m})\neq\Im_{M}(r_{\pounds}s_{\pounds}|M_{G}^{m})$, for any $1\leq \pounds \leq n$ and so $|M_{G}^{m}|\leq n+1$, suggesting that $mdim(\mathbb{D}^{t}_{n})=n+1$ in this case.\\

\textbf{Case(\rom{2})} $n\equiv1(mod 2)$.\\
In this case, $n$ can be written as $n = 2\aleph+1$, where $\aleph \in \mathbb{N}$ and $\aleph \geq 3$. Let $M_{G}^{\ast} = \{p_{1}, s_{1},s_{2}, s_{\aleph+1},\\ s_{\aleph+2}\} \subset \mathbb{V}(\mathbb{D}^{t}_{n})$. Now, to figure out that $M_{G}^{\ast}$ is the MMG for the Prism allied graph $\mathbb{D}^{t}_{n}$, we consign the mixed metric codes for each vertex and each edge of the graph $\mathbb{D}^{t}_{n}$ regarding $M_{G}^{\ast}$. \\

Now, the mixed metric codes for the vertices $\{\upsilon=p_{\pounds}, q_{\pounds}, r_{\pounds}, s_{\pounds}| \pounds=1,2,3,...,n\}$ regarding the set $M_{G}^{\ast}$ are shown below in the following four tables respectively.
\vspace{-0.4mm}
\begin{center}
 \begin{tabular}{{|m{16.0em}|m{22.0em}|}}
 \hline
  $\Im_{M}(\upsilon |M_{G}^{\ast})$ & $M_{G}^{\ast} = \{p_{1}, s_{1},s_{2}, s_{\aleph+1}, s_{\aleph+2}\}$ \\
 \hline
 \vspace{-0.4mm}
 $\Im_{M}(p_{\pounds}|M_{G}^{\ast})$: ($\pounds=1$)                       & $(\pounds-1,3,4,\aleph-\pounds+4,\aleph+2)$ \\
 \hline
 $\Im_{M}(p_{\pounds}|M_{G}^{\ast})$: ($\pounds=2$)                       & $(\pounds-1,\pounds+1,3,\aleph-\pounds+4,\aleph-\pounds+5)$ \\
 \hline
 $\Im_{M}(p_{\pounds}|M_{G}^{\ast})$: ($3 \leq \pounds \leq \aleph+1 $)& $(\pounds-1,\pounds+1,\pounds,\aleph-\pounds+4,\aleph-\pounds+5)$ \\
 \hline
 $\Im_{M}(p_{\pounds}|M_{G}^{\ast})$: ($\pounds=\aleph+2$)             & $(2\aleph-\pounds+2,2\aleph-\pounds+5,\pounds,\pounds-\aleph+1,\aleph-\pounds+5)$ \\
 \hline
 $\Im_{M}(p_{\pounds}|M_{G}^{\ast})$:($\aleph+3 \leq \pounds \leq 2\aleph+1$)   & $(2\aleph-\pounds+2,2\aleph-\pounds+5,2\aleph-\pounds+6,\pounds-\aleph+1,\pounds-\aleph)$ \\
 \hline
 \end{tabular}
 \end{center}
\vspace{-0.4mm}
\begin{center}
 \begin{tabular}{{|m{16.0em}|m{22.0em}|}}
 \hline
  $\Im_{M}(\upsilon |M_{G}^{\ast})$ & $M_{G}^{\ast} = \{p_{1}, s_{1},s_{2}, s_{\aleph+1}, s_{\aleph+2}\}$ \\
 \hline
 \vspace{-0.4mm}
 $\Im_{M}(q_{\pounds}|M_{G}^{\ast})$: ($\pounds=1$)                       & $(\pounds,2,3,\aleph-\pounds+3,\aleph+1)$ \\
 \hline
 $\Im_{M}(q_{\pounds}|M_{G}^{\ast})$: ($\pounds=2$)                       & $(\pounds,\pounds,2,\aleph-\pounds+3,\aleph-\pounds+4)$ \\
 \hline
 $\Im_{M}(q_{\pounds}|M_{G}^{\ast})$: ($3 \leq \pounds \leq \aleph+1$) & $(\pounds,\pounds,\pounds-1,\aleph-\pounds+3,\aleph-\pounds+4)$ \\
 \hline
 $\Im_{M}(q_{\pounds}|M_{G}^{\ast})$: ($\pounds=\aleph+2$)             & $(2\aleph-\pounds+3,2\aleph-\pounds+4,\pounds-1,\pounds-\aleph,\aleph-\pounds+4)$ \\
 \hline
 $\Im_{M}(q_{\pounds}|M_{G}^{\ast})$: ($\aleph+3 \leq \pounds \leq 2\aleph+1$)   & $(2\aleph-\pounds+3,2\aleph-\pounds+4,2\aleph-\pounds+5,\pounds-\aleph,\pounds-\aleph-1)$ \\
 \hline
 \end{tabular}
 \end{center}
\vspace{-0.4mm}
\begin{center}
 \begin{tabular}{{|m{16.0em}|m{22.0em}|}}
 \hline
  $\Im_{M}(\upsilon |M_{G}^{\ast})$ & $M_{G}^{\ast} = \{p_{1}, s_{1},s_{2}, s_{\aleph+1}, s_{\aleph+2}\}$ \\
 \hline
 \vspace{-0.4mm}
 $\Im_{M}(r_{\pounds}|M_{G}^{\ast})$: ($\pounds=1$)                    & $(\pounds+1,1,3,\aleph-\pounds+3,\aleph+2)$ \\
 \hline
 $\Im_{M}(r_{\pounds}|M_{G}^{\ast})$: ($\pounds=2$)                    & $(\pounds+1,\pounds+1,1,\aleph-\pounds+3,\aleph-\pounds+4)$ \\
 \hline
 $\Im_{M}(r_{\pounds}|M_{G}^{\ast})$: ($3 \leq \pounds \leq \aleph$)   & $(\pounds+1,\pounds+1,\pounds,\aleph-\pounds+3,\aleph-\pounds+4)$ \\
 \hline
 $\Im_{M}(r_{\pounds}|M_{G}^{\ast})$: ($\pounds=\aleph+1$)             & $(\pounds+1,\pounds+1,\pounds,1,\aleph-\pounds+4)$ \\
 \hline
 $\Im_{M}(r_{\pounds}|M_{G}^{\ast})$: ($\pounds=\aleph+2$)             & $(2\aleph-\pounds+3,2\aleph-\pounds+4,\pounds,\pounds-\aleph+1,1)$ \\
 \hline
 $\Im_{M}(r_{\pounds}|M_{G}^{\ast})$: ($\aleph+3 \leq \pounds \leq 2\aleph+1$)   & $(2\aleph-\pounds+3,2\aleph-\pounds+4,2\aleph-\pounds+5,\pounds-\aleph+1,\pounds-\aleph)$ \\
 \hline
 \end{tabular}
 \end{center}
 \vspace{-0.4mm}
\begin{center}
 \begin{tabular}{{|m{16.0em}|m{22.0em}|}}
 \hline
  $\Im_{M}(\upsilon |M_{G}^{\ast})$ & $M_{G}^{\ast} = \{p_{1}, s_{1},s_{2}, s_{\aleph+1}, s_{\aleph+2}\}$ \\
 \hline
 \vspace{-0.4mm}
 $\Im_{M}(s_{\pounds}|M_{G}^{\ast})$: ($\pounds=1$)                       & $(\pounds+2,0,4,\aleph-\pounds+4,\aleph+3)$ \\
 \hline
 $\Im_{M}(s_{\pounds}|M_{G}^{\ast})$: ($\pounds=2$)                       & $(\pounds+2,\pounds+2,0,\aleph-\pounds+4,\aleph-\pounds+5)$ \\
 \hline
 $\Im_{M}(s_{\pounds}|M_{G}^{\ast})$: ($3 \leq \pounds \leq \aleph$)   & $(\pounds+2,\pounds+2,\pounds+1,\aleph-\pounds+4,\aleph-\pounds+5)$ \\
 \hline
 $\Im_{M}(s_{\pounds}|M_{G}^{\ast})$: ($\pounds=\aleph+1$)             & $(\pounds+2,\pounds+2,\pounds+1,0,\aleph-\pounds+5)$ \\
 \hline
 $\Im_{M}(s_{\pounds}|M_{G}^{\ast})$: ($\pounds=\aleph+2$)             & $(2\aleph-\pounds+4,2\aleph-\pounds+5,\pounds+1,\pounds-\aleph+2,0)$ \\
 \hline
 $\Im_{M}(s_{\pounds}|M_{G}^{\ast})$: ($\aleph+3 \leq \pounds \leq 2\aleph+1$)   & $(2\aleph-\pounds+4,2\aleph-\pounds+5,2\aleph-\pounds+6,\pounds-\aleph+2,\pounds-\aleph+1)$ \\
 \hline
 \end{tabular}
 \end{center}
\vspace{-0.4mm}
and the mixed metric codes for the edges $\{\epsilon=p_{\pounds}p_{\pounds+1}, p_{\pounds}q_{\pounds}, q_{\pounds}q_{\pounds+1}, q_{\pounds}r_{\pounds}, r_{\pounds}q_{\pounds+1}, r_{\pounds}s_{\pounds}| \pounds=1,2,3,...,n\}$ regarding the set $M_{G}^{\ast}$ are shown in the tables below, respectively.
\vspace{-0.4mm}
\begin{center}
 \begin{tabular}{{|m{16.5em}|m{21.5em}|}}
 \hline
  $\Im_{M}(\epsilon |M_{G}^{\ast})$ & $M_{G}^{\ast} = \{p_{1}, s_{1},s_{2}, s_{\aleph+1}, s_{\aleph+2}\}$ \\
 \hline
 \vspace{-0.4mm}
 $\Im_{M}(p_{\pounds}p_{\pounds+1}|M_{G}^{\ast})$: ($ \pounds=1$)                    & $(\pounds-1,3,3,\aleph-\pounds+3,\aleph+2)$ \\
 \hline
 $\Im_{M}(p_{\pounds}p_{\pounds+1}|M_{G}^{\ast})$: ($\pounds=2$)                     & $(\pounds-1,\pounds+1,3,\aleph-\pounds+3,\aleph-\pounds+4)$ \\
 \hline
 $\Im_{M}(p_{\pounds}p_{\pounds+1}|M_{G}^{\ast})$: ($3 \leq \pounds \leq \aleph$) & $(\pounds-1,\pounds+1,\pounds,\aleph-\pounds+3,\aleph-\pounds+4)$ \\
 \hline
 $\Im_{M}(p_{\pounds}p_{\pounds+1}|M_{G}^{\ast})$: ($\pounds=\aleph+1$)        & $(2\aleph-\pounds+1,\pounds+1,\pounds,3,\aleph-\pounds+4)$ \\
 \hline
 $\Im_{M}(p_{\pounds}p_{\pounds+1}|M_{G}^{\ast})$: ($\pounds=\aleph+2$)     & $(2\aleph-\pounds+1,2\aleph-\pounds+4,\pounds,\pounds-\aleph+1,3)$ \\
 \hline
 $\Im_{M}(p_{\pounds}p_{\pounds+1}|M_{G}^{\ast})$:($\aleph+3 \leq \pounds \leq 2\aleph+1$)     & $(2\aleph-\pounds+1,2\aleph-\pounds+4,2\aleph-\pounds+5,\pounds-\aleph+1,\pounds-\aleph)$ \\
 \hline
 \end{tabular}
 \end{center}
\vspace{-0.4mm}
\begin{center}
 \begin{tabular}{{|m{16.0em}|m{22.0em}|}}
 \hline
  $\Im_{M}(\epsilon |M_{G}^{\ast})$ & $M_{G}^{\ast} = \{p_{1}, s_{1},s_{2}, s_{\aleph+1}, s_{\aleph+2}\}$ \\
 \hline
 \vspace{-0.4mm}
 $\Im_{M}(p_{\pounds}q_{\pounds}|M_{G}^{\ast})$: ($ \pounds=1$)                    & $(\pounds-1,2,3,\aleph-\pounds+3,\aleph+1)$ \\
 \hline
 $\Im_{M}(p_{\pounds}q_{\pounds}|M_{G}^{\ast})$: ($\pounds=2$)                     & $(\pounds-1,\pounds,2,\aleph-\pounds+3,\aleph-\pounds+4)$ \\
 \hline
 $\Im_{M}(p_{\pounds}q_{\pounds}|M_{G}^{\ast})$: ($3 \leq \pounds \leq \aleph+1$) & $(\pounds-1,\pounds,\pounds-1,\aleph-\pounds+3,\aleph-\pounds+4)$ \\
 \hline
 $\Im_{M}(p_{\pounds}q_{\pounds}|M_{G}^{\ast})$: ($\pounds=\aleph+2$)     & $(2\aleph-\pounds+2,2\aleph-\pounds+4,\pounds-1,\pounds-\aleph,\aleph-\pounds+4)$ \\
 \hline
 $\Im_{M}(p_{\pounds}q_{\pounds}|M_{G}^{\ast})$: ($\aleph+3 \leq \pounds \leq 2\aleph+1$)     & $(2\aleph-\pounds+2,2\aleph-\pounds+4,2\aleph-\pounds+5,\pounds-\aleph,\pounds-\aleph-1)$ \\
 \hline
 \end{tabular}
 \end{center}
\vspace{-0.4mm}
\begin{center}
 \begin{tabular}{{|m{16.5em}|m{21.5em}|}}
 \hline
  $\Im_{M}(\epsilon |M_{G}^{\ast})$ & $M_{G}^{\ast} = \{p_{1}, s_{1},s_{2}, s_{\aleph+1}, s_{\aleph+2}\}$ \\
 \hline
 \vspace{-0.4mm}
 $\Im_{M}(q_{\pounds}q_{\pounds+1}|M_{G}^{\ast})$: ($ \pounds=1$)                    & $(\pounds,2,2,\aleph-\pounds+2,\aleph+1)$ \\
 \hline
 $\Im_{M}(q_{\pounds}q_{\pounds+1}|M_{G}^{\ast})$: ($\pounds=2$)                     & $(\pounds,\pounds,2,\aleph-\pounds+2,\aleph-\pounds+3)$ \\
 \hline
 $\Im_{M}(q_{\pounds}q_{\pounds+1}|M_{G}^{\ast})$: ($3 \leq \pounds \leq \aleph$) & $(\pounds,\pounds,\pounds-1,\aleph-\pounds+2,\aleph-\pounds+3)$ \\
 \hline
 $\Im_{M}(q_{\pounds}q_{\pounds+1}|M_{G}^{\ast})$: ($\pounds=\aleph+1$)     & $(2\aleph-\pounds+2,\pounds,\pounds-1,2,\aleph-\pounds+3)$ \\
 \hline
 $\Im_{M}(q_{\pounds}q_{\pounds+1}|M_{G}^{\ast})$: ($\pounds=\aleph+2$)     & $(2\aleph-\pounds+2,2\aleph-\pounds+3,\pounds-1,\pounds-\aleph,2)$ \\
 \hline
 $\Im_{M}(q_{\pounds}q_{\pounds+1}|M_{G}^{\ast})$:($\aleph+3 \leq \pounds \leq 2\aleph+1$)     & $(2\aleph-\pounds+2,2\aleph-\pounds+3,2\aleph-\pounds+4,\pounds-\aleph,\pounds-\aleph-1)$ \\
 \hline
 \end{tabular}
 \end{center}
\vspace{-0.4mm}
\begin{center}
 \begin{tabular}{{|m{16.0em}|m{22.0em}|}}
 \hline
  $\Im_{M}(\epsilon |M_{G}^{\ast})$ & $M_{G}^{\ast} = \{p_{1}, s_{1},s_{2}, s_{\aleph+1}, s_{\aleph+2}\}$ \\
 \hline
 \vspace{-0.4mm}
 $\Im_{M}(q_{\pounds}r_{\pounds}|M_{G}^{\ast})$: ($ \pounds=1$)                    & $(\pounds,1,3,\aleph-\pounds+3,\aleph+1)$ \\
 \hline
 $\Im_{M}(q_{\pounds}r_{\pounds}|M_{G}^{\ast})$: ($\pounds=2$)                     & $(\pounds,\pounds,1,\aleph-\pounds+3,\aleph-\pounds+4)$ \\
 \hline
 $\Im_{M}(q_{\pounds}r_{\pounds}|M_{G}^{\ast})$: ($3 \leq \pounds \leq \aleph$) & $(\pounds,\pounds,\pounds-1,\aleph-\pounds+3,\aleph-\pounds+4)$ \\
 \hline
 $\Im_{M}(q_{\pounds}r_{\pounds}|M_{G}^{\ast})$: ($\pounds=\aleph+1$)     & $(\pounds,\pounds,\pounds-1,1,\aleph-\pounds+4)$ \\
 \hline
 $\Im_{M}(q_{\pounds}r_{\pounds}|M_{G}^{\ast})$: ($\pounds=\aleph+2$)     & $(2\aleph-\pounds+3,2\aleph-\pounds+4,\pounds-1,\pounds-\aleph,1)$ \\
 \hline
 $\Im_{M}(q_{\pounds}r_{\pounds}|M_{G}^{\ast})$: ($\aleph+3 \leq \pounds \leq 2\aleph+1$)     & $(2\aleph-\pounds+3,2\aleph-\pounds+4,2\aleph-\pounds+5,\pounds-\aleph,\pounds-\aleph-1)$ \\
 \hline
 \end{tabular}
 \end{center}
 \vspace{-0.4mm}
\begin{center}
 \begin{tabular}{{|m{16.5em}|m{21.5em}|}}
 \hline
  $\Im_{M}(\epsilon |M_{G}^{\ast})$ & $M_{G}^{\ast} = \{p_{1}, s_{1},s_{2}, s_{\aleph+1}, s_{\aleph+2}\}$ \\
 \hline
 \vspace{-0.4mm}
 $\Im_{M}(r_{\pounds}q_{\pounds+1}|M_{G}^{\ast})$: ($ \pounds=1$)                    & $(\pounds+1,1,2,\aleph-\pounds+2,\aleph-\pounds+3)$ \\
 \hline
 $\Im_{M}(r_{\pounds}q_{\pounds+1}|M_{G}^{\ast})$: ($\pounds=2$)                     & $(\pounds+1,\pounds+1,1,\aleph-\pounds+2,\aleph-\pounds+3)$ \\
 \hline
 $\Im_{M}(r_{\pounds}q_{\pounds+1}|M_{G}^{\ast})$: ($3 \leq \pounds \leq \aleph$) & $(\pounds+1,\pounds+1,\pounds,\aleph-\pounds+2,\aleph-\pounds+3)$ \\
 \hline
 $\Im_{M}(r_{\pounds}q_{\pounds+1}|M_{G}^{\ast})$: ($\pounds=\aleph+1$)     & $(2\aleph-\pounds+2,2\aleph-\pounds+3,\pounds,1,\aleph-\pounds+3)$ \\
 \hline
 $\Im_{M}(r_{\pounds}q_{\pounds+1}|M_{G}^{\ast})$: ($\pounds=\aleph+2$)     & $(2\aleph-\pounds+2,2\aleph-\pounds+3,2\aleph-\pounds+4,\pounds-\aleph+1,1)$ \\
 \hline
 $\Im_{M}(r_{\pounds}q_{\pounds+1}|M_{G}^{\ast})$:($\aleph+3 \leq \pounds \leq 2\aleph+1$)     & $(2\aleph-\pounds+2,2\aleph-\pounds+3,2\aleph-\pounds+4,\pounds-\aleph+1,\pounds-\aleph)$ \\
 \hline
 \end{tabular}
 \end{center}
 \vspace{-0.4mm}
\begin{center}
 \begin{tabular}{{|m{16.0em}|m{22.0em}|}}
 \hline
  $\Im_{M}(\epsilon |M_{G}^{\ast})$ & $M_{G}^{\ast} = \{p_{1}, s_{1},s_{2}, s_{\aleph+1}, s_{\aleph+2}\}$ \\
 \hline
 \vspace{-0.4mm}
 $\Im_{M}(r_{\pounds}s_{\pounds}|M_{G}^{\ast})$: ($ \pounds=1$)                    & $(\pounds+1,0,3,\aleph-\pounds+3,\aleph+2)$ \\
 \hline
 $\Im_{M}(r_{\pounds}s_{\pounds}|M_{G}^{\ast})$: ($\pounds=2$)                     & $(\pounds+1,\pounds+1,0,\aleph-\pounds+3,\aleph-\pounds+4)$ \\
 \hline
 $\Im_{M}(r_{\pounds}s_{\pounds}|M_{G}^{\ast})$: ($3 \leq \pounds \leq \aleph$) & $(\pounds+1,\pounds+1,\pounds,\aleph-\pounds+3,\aleph-\pounds+4)$ \\
 \hline
 $\Im_{M}(r_{\pounds}s_{\pounds}|M_{G}^{\ast})$: ($\pounds=\aleph+1$)     & $(\pounds+1,\pounds+1,\pounds,0,\aleph-\pounds+4)$ \\
 \hline
 $\Im_{M}(r_{\pounds}s_{\pounds}|M_{G}^{\ast})$: ($\pounds=\aleph+2$)     & $(2\aleph-\pounds+3,2\aleph-\pounds+4,\pounds,\pounds-\aleph+1,0)$ \\
 \hline
 $\Im_{M}(r_{\pounds}s_{\pounds}|M_{G}^{\ast})$: ($\aleph+3 \leq \pounds \leq 2\aleph+1$)     & $(2\aleph-\pounds+3,2\aleph-\pounds+4,2\aleph-\pounds+5,\pounds-\aleph+1,\pounds-\aleph)$ \\
 \hline
 \end{tabular}
 \end{center}
Now, from these mixed metric codes of the edges and the vertices of the Prism allied graph $\mathbb{D}^{t}_{n}$ concerning the set $M_{G}^{\ast}$, we ascertain that for $1\leq \pounds \leq n$ and $\pounds\neq 1,2, \aleph+1, \aleph+2$, $\Im_{M}(q_{\pounds}|M_{G}^{\ast})=\Im_{M}(r_{\pounds}q_{\pounds}|M_{G}^{\ast})$, $\Im_{M}(q_{\pounds+1}|M_{G}^{\ast})=\Im_{M}(r_{\pounds}q_{\pounds+1}|M_{G}^{\ast})$, and $\Im_{M}(r_{\pounds}|M_{G}^{\ast})=\Im_{M}(r_{\pounds}s_{\pounds}|M_{G}^{\ast})$. For the remaining mixed metric edges and vertices codes in $\mathbb{D}^{t}_{n}$, we find no two vertices or edges with the same mixed metric codes. For $\pounds=3,4,...,\aleph-1,\aleph,\aleph+2,\aleph+3,...,n$, we see that $\Im_{M}(q_{\pounds}|M_{G}^{\ast}\cup\{s_{\pounds}\})\neq\Im_{M}(r_{\pounds}q_{\pounds}|M_{G}^{\ast}\cup\{s_{\pounds}\})$, $\Im_{M}(q_{\pounds+1}|M_{G}^{\ast}\cup\{s_{\pounds}\})\neq\Im_{M}(r_{\pounds}q_{\pounds+1}|M_{G}^{\ast}\cup\{s_{\pounds}\})$, and $\Im_{M}(r_{\pounds}|M_{G}^{\ast}\cup\{s_{\pounds}\})\neq\Im_{M}(r_{\pounds}s_{\pounds}|M_{G}^{\ast}\cup\{s_{\pounds}\})$. From this, we obtain $\Im_{M}(q_{\pounds}|M_{G}^{m})\neq\Im_{M}(r_{\pounds}q_{\pounds}|M_{G}^{m})$, $\Im_{M}(q_{\pounds+1}|M_{G}^{m})\neq\Im_{M}(r_{\pounds}q_{\pounds+1}|M_{G}^{m})$, and $\Im_{M}(r_{\pounds}|M_{G}^{m})\neq\Im_{M}(r_{\pounds}s_{\pounds}|M_{G}^{m})$, for any $1\leq \pounds \leq n$ and so $|M_{G}|\leq n+1$, suggesting that $mdim(\mathbb{D}^{t}_{n})=n+1$ in this case also, which concludes the theorem.
\end{proof}
\hspace{-5.0mm}\textbf{Theorem 6.}
{\it The independent mixed metric number for the Prism allied graph $\mathbb{D}^{t}_{n}$, for $n\geq4$ is $n+1$.}
\begin{proof}
  For proof, refer to Theorem $5$.
\end{proof}

\section{Mixed Resolvability of the Web Graph $\mathbb{W}_{n}$}
The Web graph $\mathbb{W}_{n}$ \cite{ys} has a vertex set of cardinality $3n$ and an edge set of cardinality $4n$, indicated by $\mathbb{V}(\mathbb{W}_{n})$ and $\mathbb{E}(\mathbb{W}_{n})$ respectively, where $\mathbb{V}(\mathbb{W}_{n})=\{p_{\pounds},q_{\pounds}, r_{\pounds}|1\leq \pounds \leq n\}$ and $\mathbb{E}(\mathbb{W}_{n})=\{p_{\pounds}q_{\pounds},p_{\pounds}p_{\pounds+1}, q_{\pounds}q_{\pounds+1}, r_{\pounds}q_{\pounds}|1\leq \pounds \leq n\}$. It comprises of $n$ $4$-sided faces, $n$ pendant edges, and an $n$-sided face (see Figure 2). The Web graph $\mathbb{W}_{n}$ can also be obtained from the Prism graph $\mathbb{D}_{n}$ by simply including $n$ new pendant edges $q_{\pounds}r_{\pounds}$ $(1\leq \pounds \leq n)$.

\begin{center}
  \begin{figure}[h!]
  \centering
  \includegraphics[width=3in]{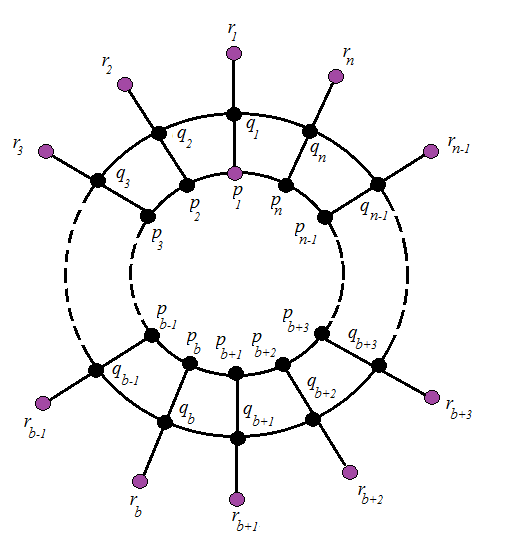}
  \caption{The Web Graph $\mathbb{W}_{n}$, for $n\geq4$.}\label{p1}
\end{figure}
\end{center}

For our smooth purpose, we refer to the cycle brought forth by the arrangement of vertices $\{q_{\pounds}:1 \leqslant \pounds \leqslant n\}$ and $\{p_{\pounds}:1 \leqslant \pounds \leqslant n\}$ in the graph, $\mathbb{W}_{n}$ as the $q$ and $p$-cycle respectively, the arrangement of vertices $\{r_{\pounds}:1 \leqslant \pounds \leqslant n\}$, in the graph, $\mathbb{W}_{n}$ as the set of pendant vertices respectively. For our convenience, we consider $r_{1}=r_{n+1}$, $q_{1}=q_{n+1}$, and $p_{1}=p_{n+1}$. In this working section, we obtain that the least possible cardinality for the MMG $M_{G}^{m}$ of the Web graph $\mathbb{W}_{n}$ is $n+1$. For this, we also see that the MMG $M_{G}^{m}$ for the Web graph $\mathbb{W}_{n}$ is independent. Now, in order to get the exact MMD of graph $\mathbb{W}_{n}$, we need the following Lemmas:

\begin{lem}
The set of outer vertices $\{r_{\pounds}| 1\leq \pounds \leq n\}\subset M_{G}^{m}$, where $M_{G}^{m}$ is a MMG for the Web graph $\mathbb{W}_{n}$.
\end{lem}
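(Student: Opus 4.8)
The plan is to mimic the structure of Lemma~1 from the Prism allied graph section, since the Web graph $\mathbb{W}_{n}$ is obtained from the Prism graph by attaching $n$ pendant edges $q_{\pounds}r_{\pounds}$, and each pendant vertex $r_{\pounds}$ plays exactly the role that $s_{\pounds}$ played in $\mathbb{D}^{t}_{n}$. So I would argue by contradiction: suppose $M_{G}^{m}$ is a MMG for $\mathbb{W}_{n}$ and that some pendant vertex $r_{\pounds}\notin M_{G}^{m}$.

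The key observation is that a pendant vertex of degree one can only be distinguished from its incident pendant edge by itself. Concretely, let $r_{\pounds}$ be a pendant vertex adjacent to $q_{\pounds}$, and consider the vertex $r_{\pounds}$ and the edge $q_{\pounds}r_{\pounds}$. For every vertex $w\neq r_{\pounds}$ in the graph, any shortest $w$--$r_{\pounds}$ path must pass through $q_{\pounds}$, so $d_{\mathbb{W}_{n}}(w,r_{\pounds})=d_{\mathbb{W}_{n}}(w,q_{\pounds})+1$, whereas $d_{\mathbb{W}_{n}}(w,q_{\pounds}r_{\pounds})=\min\{d_{\mathbb{W}_{n}}(w,q_{\pounds}),d_{\mathbb{W}_{n}}(w,r_{\pounds})\}=d_{\mathbb{W}_{n}}(w,q_{\pounds})$. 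Hence no vertex other than $r_{\pounds}$ itself can ever separate the element $r_{\pounds}$ from the element $q_{\pounds}r_{\pounds}$. Therefore, if $r_{\pounds}\notin M_{G}^{m}$, then $\Im_{M}(r_{\pounds}\mid M_{G}^{m})=\Im_{M}(q_{\pounds}r_{\pounds}\mid M_{G}^{m})$, contradicting the assumption that $M_{G}^{m}$ is a mixed metric generator. Since $\pounds$ was arbitrary, every $r_{\pounds}$ must lie in $M_{G}^{m}$, giving $\{r_{\pounds}\mid 1\le \pounds\le n\}\subseteq M_{G}^{m}$.

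The proof is essentially immediate once the pendant-vertex distance identity above is recorded, so there is no real obstacle; the only thing to be careful about is making the distance computation airtight — i.e.\ genuinely verifying that in $\mathbb{W}_{n}$ a pendant vertex $r_{\pounds}$ has degree one so that every geodesic into it factors through $q_{\pounds}$, which is clear from the edge set $\mathbb{E}(\mathbb{W}_{n})=\{p_{\pounds}q_{\pounds},p_{\pounds}p_{\pounds+1},q_{\pounds}q_{\pounds+1},r_{\pounds}q_{\pounds}\}$. One can in fact phrase this as a general principle (any pendant vertex must belong to any mixed metric generator) and then apply it $n$ times, which keeps the write-up short and parallel to the proof of Lemma~1.
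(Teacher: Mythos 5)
Your argument is correct and follows essentially the same route as the paper: assume some pendant vertex $r_{\pounds}\notin M_{G}^{m}$ and derive $\Im_{M}(r_{\pounds}\mid M_{G}^{m})=\Im_{M}(q_{\pounds}r_{\pounds}\mid M_{G}^{m})$, contradicting that $M_{G}^{m}$ is a mixed metric generator. Your write-up simply makes explicit the pendant-vertex distance identity that the paper leaves implicit, which is a welcome clarification but not a different proof.
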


\begin{proof}
For the inconsistency, we suppose that the MMG $M_{G}^{m}$, does not contain at least one vertex from the set $\{r_{\pounds}| 1\leq \pounds \leq n\}$. Without loss of generality, we suppose that $r_{\pounds}\not\in M_{G}^{m}$, for any $\pounds$. At that point, we have $\Im_{M}(r_{\pounds}|M_{G}^{m})=\Im_{M}(r_{\pounds}q_{\pounds}|M_{G}^{m})$, a contradiction.

\end{proof}

\begin{lem}
Let $P=\{p_{\pounds}| 1\leq \pounds \leq n\}$ and $M_{G}^{m}$ be any MMG for the Web graph $\mathbb{W}_{n}$. Then, $P \cap M_{G}^{m}\neq \emptyset$.
\end{lem}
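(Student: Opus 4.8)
\textbf{Proof proposal for Lemma 7 (that $P\cap M_{G}^{m}\neq\emptyset$ for the Web graph $\mathbb{W}_{n}$).}

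The plan is to argue by contradiction in exactly the spirit of Lemma 2, which handled the analogous statement for the Prism allied graph. Assume $P\cap M_{G}^{m}=\emptyset$, i.e.\ no inner-cycle vertex $p_{\pounds}$ lies in the mixed metric generator. The idea is to exhibit a vertex and an incident edge that no vertex outside $P$ can separate. The natural candidate is the vertex $p_{\pounds}$ itself together with the spoke edge $p_{\pounds}q_{\pounds}$: I claim $\Im_{M}(p_{\pounds}\mid M_{G}^{m})=\Im_{M}(p_{\pounds}q_{\pounds}\mid M_{G}^{m})$ for every choice of $\pounds$, which would be the desired contradiction since these are two distinct elements of $\mathbb{V}(\mathbb{W}_{n})\cup\mathbb{E}(\mathbb{W}_{n})$.

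To verify this, first recall that for any vertex $x$ one has $d(x,p_{\pounds}q_{\pounds})=\min\{d(x,p_{\pounds}),d(x,q_{\pounds})\}$, so it suffices to show $d(x,p_{\pounds})\le d(x,q_{\pounds})$ for every $x\notin\{p_1,\dots,p_n\}$, because then the minimum is always attained at $p_{\pounds}$ and the two codes coincide coordinate-by-coordinate. The vertices not in $P$ are the $q_j$'s and the pendant vertices $r_j$. The key structural observation is that in $\mathbb{W}_{n}$ the only neighbour of $p_{\pounds}$ outside the $p$-cycle is $q_{\pounds}$, and $q_{\pounds}$ is the unique "gateway" from $p_{\pounds}$ to the rest of the graph through the $q$-cycle; moreover each pendant $r_j$ hangs only off $q_j$. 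So for $x=q_j$ a shortest $q_j$–$p_{\pounds}$ path must pass through $q_{\pounds}$ (entering the $p$-cycle only at $p_{\pounds}$ via the spoke), giving $d(q_j,p_{\pounds})=d(q_j,q_{\pounds})+1>d(q_j,q_{\pounds})$; similarly $d(r_j,p_{\pounds})=d(r_j,q_{\pounds})+1$, since $r_j$ reaches $p_{\pounds}$ only by first going $r_j\to q_j\to\cdots\to q_{\pounds}\to p_{\pounds}$. In every case $d(x,p_{\pounds})=d(x,q_{\pounds})+1$, hence $\min\{d(x,p_{\pounds}),d(x,q_{\pounds})\}=d(x,q_{\pounds})=d(x,p_{\pounds}q_{\pounds})$. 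Therefore $\Im_{M}(p_{\pounds}\mid M_{G}^{m})=\Im_{M}(p_{\pounds}q_{\pounds}\mid M_{G}^{m})$, contradicting that $M_{G}^{m}$ is a MMG.

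The only real obstacle is making the distance claim "$d(x,p_{\pounds})=d(x,q_{\pounds})+1$ for all $x\notin P$" fully rigorous: one must confirm that no shortest path from an outside vertex $x$ to $p_{\pounds}$ can shortcut through the $p$-cycle at some $p_j$ with $j\neq\pounds$ and still do better. This follows because to reach any $p_j$ from outside $P$ one must use the spoke $q_jp_j$, so any $x$–$p_{\pounds}$ path through $p_j$ has length $d(x,q_j)+1+d(p_j,p_{\pounds})\ge d(x,q_j)+1+d(q_j,q_{\pounds})\ge d(x,q_{\pounds})+1$ (using the triangle inequality on the $q$-cycle and that $d(p_j,p_{\pounds})=d(q_j,q_{\pounds})$ along the two parallel $n$-cycles), so the bound $d(x,p_{\pounds})\ge d(x,q_{\pounds})+1$ is never beaten, and equality is realised by the path through $q_{\pounds}$. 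With this in hand the lemma is immediate; indeed, once this lemma and Lemma 6 are established, the lower bound $mdim(\mathbb{W}_n)\ge n+1$ will follow exactly as in Lemma 3.
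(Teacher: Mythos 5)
Your proposal contains a genuine logical error, although it is easily repaired. You announce the contradiction as $\Im_{M}(p_{\pounds}\mid M_{G}^{m})=\Im_{M}(p_{\pounds}q_{\pounds}\mid M_{G}^{m})$ and correctly note that for this you would need $d(x,p_{\pounds})\le d(x,q_{\pounds})$ for every $x\notin P$, so that the minimum defining $d(x,p_{\pounds}q_{\pounds})$ is attained at $p_{\pounds}$. But your own distance analysis then establishes the opposite strict inequality: $d(x,p_{\pounds})=d(x,q_{\pounds})+1>d(x,q_{\pounds})$ for every $q_{j}$ and every pendant $r_{j}$. From this it follows that $d(x,p_{\pounds}q_{\pounds})=d(x,q_{\pounds})=d(x,p_{\pounds})-1$, so the codes of $p_{\pounds}$ and $p_{\pounds}q_{\pounds}$ with respect to any $M_{G}^{m}\subseteq \mathbb{V}(\mathbb{W}_{n})\setminus P$ differ in \emph{every} coordinate; the pair you chose is in fact perfectly distinguished, and the concluding sentence of your second paragraph does not follow from what you proved.

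The correct conflicting pair is the vertex $q_{\pounds}$ and the edge $p_{\pounds}q_{\pounds}$: your computation $d(x,p_{\pounds}q_{\pounds})=\min\{d(x,p_{\pounds}),d(x,q_{\pounds})\}=d(x,q_{\pounds})$ for all $x\notin P$ shows immediately that $\Im_{M}(q_{\pounds}\mid M_{G}^{m})=\Im_{M}(p_{\pounds}q_{\pounds}\mid M_{G}^{m})$ whenever $P\cap M_{G}^{m}=\emptyset$, which is exactly the contradiction the paper uses (its proof of this lemma is the one-line observation of this equality). So the structural work in your proposal — that any vertex outside $P$ reaches $p_{\pounds}$ only through a spoke, hence is strictly closer to $q_{\pounds}$ than to $p_{\pounds}$ — is sound and in fact supplies the justification the paper leaves implicit; you only need to swap $p_{\pounds}$ for $q_{\pounds}$ in the claimed indistinguishable pair and delete the sentence asserting that it suffices to prove $d(x,p_{\pounds})\le d(x,q_{\pounds})$.
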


\begin{proof}
  Suppose on the contrary that $P\cap M_{G}^{m}=\emptyset$ i.e., for any $\pounds$, $p_{\pounds}\not\in M_{G}^{m}$. Then, we have $\Im_{M}(q_{\pounds}|M_{G}^{m})=\Im_{M}(p_{\pounds}q_{\pounds}|M_{G}^{m})$, a contradiction.
\end{proof}

In the accompanying Lemma, we show that the cardinality of any MMG for the Web graph $\mathbb{W}_{n}$ is greater than or equals to $n+1$ i.e., $|M_{G}^{m}|\geq n+1$.

\begin{lem}
For the Web graph $\mathbb{W}_{n}$ and $n\geq4$, we have $mdim(\mathbb{W}_{n})\geq n+1$.
\end{lem}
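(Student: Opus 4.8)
The plan is to mimic the argument used for the Prism allied graph in Lemma~3, now using the two structural results established in Lemma~7 and Lemma~8. First I would argue by contradiction: suppose $M_{G}^{m}$ is a MMG for $\mathbb{W}_{n}$ with $|M_{G}^{m}| = n$ (it suffices to rule out cardinality exactly $n$, since any smaller set is also not a MMG). By Lemma~7, the whole set of pendant vertices $\{r_{\pounds} \mid 1 \leq \pounds \leq n\}$ must be contained in $M_{G}^{m}$; since this set already has $n$ elements, we would be forced to have $M_{G}^{m} = \{r_{\pounds} \mid 1 \leq \pounds \leq n\}$ exactly.

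Next I would invoke Lemma~8, which says that $M_{G}^{m}$ must contain at least one vertex $p_{\pounds}$ from the $p$-cycle. But $M_{G}^{m} = \{r_{\pounds} \mid 1 \leq \pounds \leq n\}$ contains no $p_{\pounds}$, so $P \cap M_{G}^{m} = \emptyset$, contradicting Lemma~8. Hence $|M_{G}^{m}| \geq n+1$, i.e., $mdim(\mathbb{W}_{n}) \geq n+1$. This is essentially a one-line deduction once the two lemmas are in place, exactly parallel to how Lemma~3 follows from Lemmas~1 and~2 in Section~2.

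I do not anticipate a genuine obstacle here, since the hard work has been front-loaded into Lemmas~7 and~8; the only thing to be careful about is the bookkeeping that the pendant-vertex set has cardinality exactly $n$ (matching $|\mathbb{V}(\mathbb{W}_{n})| = 3n$ split into three $n$-element families $\{p_{\pounds}\}, \{q_{\pounds}\}, \{r_{\pounds}\}$), so that forcing all of $\{r_{\pounds}\}$ into a size-$n$ set leaves no room for anything else. One should also note the hypothesis $n \geq 4$ is inherited from Lemmas~7 and~8 (and is needed so that the representation collisions exhibited there are valid). If one wanted to be slightly more self-contained, I could alternatively phrase it as: any MMG must contain $\{r_{\pounds} \mid 1 \leq \pounds \leq n\}$ together with at least one vertex outside this set, and these two requirements are disjoint, giving $|M_{G}^{m}| \geq n + 1$ directly.
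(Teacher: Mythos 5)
Your argument is correct and is essentially identical to the paper's own proof: assume a MMG of cardinality $n$, note that by the pendant-vertex lemma it must consist exactly of $\{r_{\pounds}\mid 1\leq \pounds\leq n\}$, and then the lemma requiring $P\cap M_{G}^{m}\neq\emptyset$ gives the contradiction. Only the lemma numbering differs from the paper's labels; the reasoning matches.
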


\begin{proof}
On contrary, we suppose that the cardinality of the MMG $M_{G}^{m}$ of the Web graph $\mathbb{W}_{n}$ is equals $n$ i.e., $\beta_{M}(\mathbb{W}_{n})=n$. Then, on combining Lemma 4 and 5, we get contradiction as the cardinality of the set $\{r_{\pounds}| 1\leq \pounds \leq n\}$ is $n$. So, we must have $\beta_{M}(\mathbb{W}_{n})\geq n+1$.
\end{proof}

Now, for the Web graph $\mathbb{W}_{n}$, we have the following important result regarding its MMD:\\\\
\textbf{Theorem 7.}
{\it For the Web graph $\mathbb{W}_{n}$, we have $mdim(\mathbb{W}_{n})=n+1$, $\forall$ $n\geq4$.}

\begin{proof}
By Lemma 6, we have $mdim(\mathbb{W}_{n})\geq n+1$. Now, in order to complete the proof of the theorem, we have to show that $mdim(\mathbb{W}_{n})\leq n+1$. For this, suppose $M_{G}^{m}= \{p_{1}, r_{1}, r_{2},..., r_{n-1}, r_{n}\} \subset \mathbb{V}(\mathbb{W}_{n})$ (for the location of these vertices, see Figure 2 (vertices in purple color)). We will show that $M_{G}^{m}$ is the mixed metric basis set for the Web graph $\mathbb{W}_{n}$. By total enumeration, it can be easily checked that the set $M_{G}^{m}$ is the mixed metric basis set for the Web graph $\mathbb{W}_{n}$, when $n=4,5$. Now, for $n\geq6$, we consider the following two cases regarding the positive integer $n$ (i.e., when $n\equiv0(modn)$ and $n\equiv1(mod2)$).\\
\vspace{-0.4mm}

\textbf{Case(\rom{1})} $n\equiv0(mod2)$.\\
In this case, $n$ can be written as $n = 2\aleph$, where $\aleph \in \mathbb{N}$ and $\aleph \geq 3$. Let $M_{G}^{\ast} = \{p_{1}, r_{1},r_{2}, r_{\aleph+1}, r_{\aleph+2}\}\\ \subset \mathbb{V}(\mathbb{W}_{n})$. Now, to figure out that $M_{G}^{\ast}$ is the MMG for the Web graph $\mathbb{W}_{n}$, we consign the mixed metric codes for each vertex and each edge of the graph $\mathbb{W}_{n}$ regarding $M_{G}^{\ast}$. \\

Now, the mixed metric codes for the vertices $\{\upsilon=p_{\pounds}, q_{\pounds}, r_{\pounds}| \pounds=1,2,3,...,n\}$ regarding the set $M_{G}^{\ast}$ are shown below in the following three tables respectively.
\vspace{-0.4mm}
\begin{center}
 \begin{tabular}{{|m{16.0em}|m{22.0em}|}}
 \hline
  $\Im_{M}(\upsilon |M_{G}^{\ast})$ & $M_{G}^{\ast} = \{p_{1}, r_{1},r_{2}, r_{\aleph+1}, r_{\aleph+2}\}$ \\
 \hline
 \vspace{-0.4mm}
 $\Im_{M}(p_{\pounds}|M_{G}^{\ast})$: ($\pounds=1$)                       & $(\pounds-1,\pounds+1,3,\aleph-\pounds+3,\aleph+1)$ \\
 \hline
 $\Im_{M}(p_{\pounds}|M_{G}^{\ast})$: ($2 \leq \pounds \leq \aleph+1$) & $(\pounds-1,\pounds+1,\pounds,\aleph-\pounds+3,\aleph-\pounds+4)$ \\
 \hline
 $\Im_{M}(p_{\pounds}|M_{G}^{\ast})$: ($\pounds=\aleph+2$)             & $(2\aleph-\pounds+1,2\aleph-\pounds+3,\pounds,\pounds-\aleph+1,\aleph-\pounds+4)$ \\
 \hline
 $\Im_{M}(p_{\pounds}|M_{G}^{\ast})$: ($\aleph+3 \leq \pounds \leq 2\aleph$)   & $(2\aleph-\pounds+1,2\aleph-\pounds+3,2\aleph-\pounds+4,\pounds-\aleph+1,\pounds-\aleph)$ \\
 \hline
 \end{tabular}
 \end{center}
\vspace{-0.4mm}
\begin{center}
 \begin{tabular}{{|m{16.0em}|m{22.0em}|}}
 \hline
  $\Im_{M}(\upsilon |M_{G}^{\ast})$ & $M_{G}^{\ast} = \{p_{1}, r_{1},r_{2}, r_{\aleph+1}, r_{\aleph+2}\}$ \\
 \hline
 \vspace{-0.4mm}
 $\Im_{M}(q_{\pounds}|M_{G}^{\ast})$: ($\pounds=1$)                       & $(\pounds,\pounds,2,\aleph-\pounds+2,\aleph)$ \\
 \hline
 $\Im_{M}(q_{\pounds}|M_{G}^{\ast})$: ($2 \leq \pounds \leq \aleph+1$) & $(\pounds,\pounds,\pounds-1,\aleph-\pounds+2,\aleph-\pounds+3)$ \\
 \hline
 $\Im_{M}(q_{\pounds}|M_{G}^{\ast})$: ($\pounds=\aleph+2$)             & $(2\aleph-\pounds+2,2\aleph-\pounds+2,\pounds-1,\pounds-\aleph, \aleph-\pounds+3)$ \\
 \hline
 $\Im_{M}(q_{\pounds}|M_{G}^{\ast})$: ($\aleph+3 \leq \pounds \leq 2\aleph$)   & $(2\aleph-\pounds+2,2\aleph-\pounds+2,2\aleph-\pounds+3,\pounds-\aleph,\pounds-\aleph-1)$ \\
 \hline
 \end{tabular}
 \end{center}
\vspace{-0.4mm}
\begin{center}
 \begin{tabular}{{|m{16.0em}|m{22.0em}|}}
 \hline
  $\Im_{M}(\upsilon |M_{G}^{\ast})$ & $M_{G}^{\ast} = \{p_{1}, r_{1},r_{2}, r_{\aleph+1}, r_{\aleph+2}\}$ \\
 \hline
 \vspace{-0.4mm}
 $\Im_{M}(r_{\pounds}|M_{G}^{\ast})$: ($\pounds=1$)                       & $(\pounds+1,0,3,\aleph-\pounds+3,\aleph+1)$ \\
 \hline
 $\Im_{M}(r_{\pounds}|M_{G}^{\ast})$: ($\pounds=2$)                       & $(\pounds+1,\pounds+1,0,\aleph-\pounds+3,\aleph-\pounds+4)$ \\
 \hline
 $\Im_{M}(r_{\pounds}|M_{G}^{\ast})$: ($3 \leq \pounds \leq \aleph$)   & $(\pounds+1,\pounds+1,\pounds,\aleph-\pounds+3,\aleph-\pounds+4)$ \\
 \hline
 $\Im_{M}(r_{\pounds}|M_{G}^{\ast})$: ($\pounds=\aleph+1$)             & $(\pounds+1,\pounds+1,\pounds,0,\aleph-\pounds+4)$ \\
 \hline
 $\Im_{M}(r_{\pounds}|M_{G}^{\ast})$: ($\pounds=\aleph+2$)             & $(2\aleph-\pounds+3,2\aleph-\pounds+3,\pounds,\pounds-\aleph+1,0)$ \\
 \hline
 $\Im_{M}(r_{\pounds}|M_{G}^{\ast})$: ($\aleph+3 \leq \pounds \leq 2\aleph$)   & $(2\aleph-\pounds+3,2\aleph-\pounds+3,2\aleph-\pounds+4,\pounds-\aleph+1,\pounds-\aleph)$ \\
 \hline
 \end{tabular}
 \end{center}
 \vspace{-0.4mm}
and the mixed metric codes for the edges $\{\epsilon=p_{\pounds}p_{\pounds+1}, p_{\pounds}q_{\pounds}, q_{\pounds}q_{\pounds+1}, q_{\pounds}r_{\pounds}| \pounds=1,2,3,...,n\}$ regarding the set $M_{G}^{\ast}$ are shown in the tables below, respectively.
\vspace{-0.4mm}
\begin{center}
 \begin{tabular}{{|m{16.0em}|m{22.0em}|}}
 \hline
  $\Im_{M}(\epsilon |M_{G}^{\ast})$ & $M_{G}^{\ast} = \{p_{1}, r_{1},r_{2}, r_{\aleph+1}, r_{\aleph+2}\}$ \\
 \hline
 \vspace{-0.4mm}
 $\Im_{M}(p_{\pounds}p_{\pounds+1}|M_{G}^{\ast})$: ($ \pounds=1$)                    & $(\pounds-1,\pounds+1,2,\aleph-\pounds+2,\aleph+1)$ \\
 \hline
 $\Im_{M}(p_{\pounds}p_{\pounds+1}|M_{G}^{\ast})$: ($2 \leq \pounds \leq \aleph$) & $(\pounds-1,\pounds+1,\pounds,\aleph-\pounds+2,\aleph-\pounds+3)$ \\
 \hline
 $\Im_{M}(p_{\pounds}p_{\pounds+1}|M_{G}^{\ast})$: ($\pounds=\aleph+1$)   & $(2\aleph-\pounds,2\aleph-\pounds+2,\pounds,\pounds-\aleph+1,\aleph-\pounds+3)$ \\
 \hline
 $\Im_{M}(p_{\pounds}p_{\pounds+1}|M_{G}^{\ast})$: ($\aleph+2 \leq \pounds \leq 2\aleph$)     & $(2\aleph-\pounds,2\aleph-\pounds+2,2\aleph-\pounds+3,\pounds-\aleph+1,\pounds-\aleph)$ \\
 \hline
 \end{tabular}
 \end{center}
\vspace{-0.4mm}
\begin{center}
 \begin{tabular}{{|m{16.0em}|m{22.0em}|}}
 \hline
  $\Im_{M}(\epsilon |M_{G}^{\ast})$ & $M_{G}^{\ast} = \{p_{1}, r_{1},r_{2}, r_{\aleph+1}, r_{\aleph+2}\}$ \\
 \hline
 \vspace{-0.4mm}
 $\Im_{M}(p_{\pounds}q_{\pounds}|M_{G}^{\ast})$: ($ \pounds=1$)                    & $(\pounds-1,\pounds,2,\aleph-\pounds+2,\aleph)$ \\
 \hline
 $\Im_{M}(p_{\pounds}q_{\pounds}|M_{G}^{\ast})$: ($3 \leq \pounds \leq \aleph+1$) & $(\pounds-1,\pounds,\pounds-1,\aleph-\pounds+2,\aleph-\pounds+3)$ \\
 \hline
 $\Im_{M}(p_{\pounds}q_{\pounds}|M_{G}^{\ast})$: ($\pounds=\aleph+2$)     & $(2\aleph-\pounds+1,2\aleph-\pounds+2,\pounds-1,\pounds-\aleph,\aleph-\pounds+3)$ \\
 \hline
 $\Im_{M}(p_{\pounds}q_{\pounds}|M_{G}^{\ast})$: ($\aleph+3 \leq \pounds \leq 2\aleph$)     & $(2\aleph-\pounds+1,2\aleph-\pounds+2,2\aleph-\pounds+3,\pounds-\aleph,\pounds-\aleph-1)$ \\
 \hline
 \end{tabular}
 \end{center}
\vspace{-0.4mm}
\begin{center}
 \begin{tabular}{{|m{16.0em}|m{22.0em}|}}
 \hline
  $\Im_{M}(\epsilon |M_{G}^{\ast})$ & $M_{G}^{\ast} = \{p_{1}, r_{1},r_{2}, r_{\aleph+1}, r_{\aleph+2}\}$ \\
 \hline
 \vspace{-0.4mm}
 $\Im_{M}(q_{\pounds}q_{\pounds+1}|M_{G}^{\ast})$: ($ \pounds=1$)                    & $(\pounds,\pounds,1,\aleph-\pounds+1,\aleph)$ \\
 \hline
 $\Im_{M}(q_{\pounds}q_{\pounds+1}|M_{G}^{\ast})$: ($2 \leq \pounds \leq \aleph$) & $(\pounds,\pounds,\pounds-1,\aleph-\pounds+1,\aleph-\pounds+2)$ \\
 \hline
 $\Im_{M}(q_{\pounds}q_{\pounds+1}|M_{G}^{\ast})$: ($\pounds=\aleph+1$)   & $(2\aleph-\pounds+1,2\aleph-\pounds+1,\pounds-1,\pounds-\aleph,\aleph-\pounds+2)$ \\
 \hline
 $\Im_{M}(q_{\pounds}q_{\pounds+1}|M_{G}^{\ast})$: ($\aleph+2 \leq \pounds \leq 2\aleph$)     & $(2\aleph-\pounds+1,2\aleph-\pounds+1,2\aleph-\pounds+2,\pounds-\aleph,\pounds-\aleph-1)$ \\
 \hline
 \end{tabular}
 \end{center}
\vspace{-0.4mm}
\begin{center}
 \begin{tabular}{{|m{16.0em}|m{22.0em}|}}
 \hline
  $\Im_{M}(\epsilon |M_{G}^{\ast})$ & $M_{G}^{\ast} = \{p_{1}, r_{1},r_{2}, r_{\aleph+1}, r_{\aleph+2}\}$ \\
 \hline
 \vspace{-0.4mm}
 $\Im_{M}(q_{\pounds}r_{\pounds}|M_{G}^{\ast})$: ($ \pounds=1$)                    & $(\pounds,0,2,\aleph-\pounds+2,\aleph)$ \\
 \hline
 $\Im_{M}(q_{\pounds}r_{\pounds}|M_{G}^{\ast})$: ($\pounds=2$)                     & $(\pounds,\pounds,0,\aleph-\pounds+2,\aleph-\pounds+3)$ \\
 \hline
 $\Im_{M}(q_{\pounds}r_{\pounds}|M_{G}^{\ast})$: ($3 \leq \pounds \leq \aleph$) & $(\pounds,\pounds,\pounds-1,\aleph-\pounds+2,\aleph-\pounds+3)$ \\
 \hline
 $\Im_{M}(q_{\pounds}r_{\pounds}|M_{G}^{\ast})$: ($\pounds=\aleph+1$)     & $(\pounds,\pounds,\pounds-1,0,\aleph-\pounds+3)$ \\
 \hline
 $\Im_{M}(q_{\pounds}r_{\pounds}|M_{G}^{\ast})$: ($\pounds=\aleph+2$)     & $(2\aleph-\pounds+2,2\aleph-\pounds+2,\pounds-1,\pounds-\aleph,0)$ \\
 \hline
 $\Im_{M}(q_{\pounds}r_{\pounds}|M_{G}^{\ast})$: ($\aleph+3 \leq \pounds \leq 2\aleph$)     & $(2\aleph-\pounds+2,2\aleph-\pounds+2,2\aleph-\pounds+3,\pounds-\aleph,\pounds-\aleph-1)$ \\
 \hline
 \end{tabular}
 \end{center}

Now, from these mixed metric codes of the edges and the vertices of the Web graph $\mathbb{W}_{n}$ concerning the set $M_{G}^{\ast}$, we ascertain that for $1\leq \pounds \leq n$ and $\pounds\neq 1,2, \aleph+1, \aleph+2$, $\Im_{M}(q_{\pounds}|M_{G}^{\ast})=\Im_{M}(r_{\pounds}q_{\pounds}|M_{G}^{\ast})$. For the remaining mixed metric edges and vertices codes in $\mathbb{W}_{n}$, we find no two vertices or edges with the same mixed metric codes. For $\pounds=3,4,...,\aleph-1,\aleph,\aleph+2,\aleph+3,...,n$, we see that $\Im_{M}(q_{\pounds}|M_{G}^{\ast}\cup\{r_{\pounds}\})\neq\Im_{M}(r_{\pounds}q_{\pounds}|M_{G}^{\ast}\cup\{r_{\pounds}\})$. From this, we obtain $\Im_{M}(q_{\pounds}|M_{G}^{m})\neq\Im_{M}(r_{\pounds}q_{\pounds}|M_{G}^{m})$, for any $1\leq \pounds \leq n$ and so $|M_{G}^{m}|\leq n+1$, suggesting that $mdim(\mathbb{W}_{n})=n+1$ in this case.\\

\textbf{Case(\rom{2})} $n\equiv1(mod2)$.\\
In this case, $n$ can be written as $n = 2\aleph+1$, where $\aleph \in \mathbb{N}$ and $\aleph \geq 3$. Let $M_{G}^{\ast} = \{p_{1}, r_{1},r_{2},r_{\aleph+1}, \\ r_{\aleph+2}\} \subset \mathbb{V}(\mathbb{W}_{n})$. Now, to figure out that $M_{G}^{\ast}$ is the MMG for the Web graph $\mathbb{W}_{n}$, we consign the mixed metric codes for each vertex and each edge of the graph $\mathbb{W}_{n}$ regarding $M_{G}^{\ast}$. \\

Now, the mixed metric codes for the vertices $\{\upsilon=p_{\pounds}, q_{\pounds}, r_{\pounds}| \pounds=1,2,3,...,n\}$ regarding the set $M_{G}^{\ast}$ are shown below in the following three tables respectively.
\vspace{-0.4mm}
\begin{center}
 \begin{tabular}{{|m{16.0em}|m{22.0em}|}}
 \hline
  $\Im_{M}(\upsilon |M_{G}^{\ast})$ & $M_{G}^{\ast} = \{p_{1}, r_{1},r_{2}, r_{\aleph+1}, r_{\aleph+2}\}$ \\
 \hline
 \vspace{-0.4mm}
 $\Im_{M}(p_{\pounds}|M_{G}^{\ast})$: ($\pounds=1$)                       & $(\pounds-1,\pounds+1,3,\aleph-\pounds+3,\aleph+2)$ \\
 \hline
 $\Im_{M}(p_{\pounds}|M_{G}^{\ast})$: ($2\leq \pounds \leq \aleph+1 $)& $(\pounds-1,\pounds+1,\pounds,\aleph-\pounds+3,\aleph-\pounds+4)$ \\
 \hline
 $\Im_{M}(p_{\pounds}|M_{G}^{\ast})$: ($\pounds=\aleph+2$)   & $(2\aleph-\pounds+2,2\aleph-\pounds+4,\pounds,\pounds-\aleph+1,\aleph-\pounds+4)$ \\
 \hline
 $\Im_{M}(p_{\pounds}|M_{G}^{\ast})$: ($\aleph+3 \leq \pounds \leq 2\aleph+1$)   & $(2\aleph-\pounds+2,2\aleph-\pounds+4,2\aleph-\pounds+5,\pounds-\aleph+1,\pounds-\aleph)$ \\
 \hline
 \end{tabular}
 \end{center}
\vspace{-0.4mm}
\begin{center}
 \begin{tabular}{{|m{16.0em}|m{22.0em}|}}
 \hline
  $\Im_{M}(\upsilon |M_{G}^{\ast})$ & $M_{G}^{\ast} = \{p_{1}, r_{1},r_{2}, r_{\aleph+1}, r_{\aleph+2}\}$ \\
 \hline
 \vspace{-0.4mm}
 $\Im_{M}(q_{\pounds}|M_{G}^{\ast})$: ($\pounds=1$)                       & $(\pounds,\pounds,2,\aleph-\pounds+2,\aleph+1)$ \\
 \hline
 $\Im_{M}(q_{\pounds}|M_{G}^{\ast})$: ($2\leq \pounds \leq \aleph+1$) & $(\pounds,\pounds,\pounds-1,\aleph-\pounds+2,\aleph-\pounds+3)$ \\
 \hline
 $\Im_{M}(q_{\pounds}|M_{G}^{\ast})$: ($\pounds=\aleph+2$)             & $(2\aleph-\pounds+3,2\aleph-\pounds+3,\pounds-1,\pounds-\aleph,\aleph-\pounds+3)$ \\
 \hline
 $\Im_{M}(q_{\pounds}|M_{G}^{\ast})$: ($\aleph+3 \leq \pounds \leq 2\aleph+1$)   & $(2\aleph-\pounds+3,2\aleph-\pounds+3,2\aleph-\pounds+4,\pounds-\aleph,\pounds-\aleph-1)$ \\
 \hline
 \end{tabular}
 \end{center}
\vspace{-0.4mm}
\begin{center}
 \begin{tabular}{{|m{16.0em}|m{22.0em}|}}
 \hline
  $\Im_{M}(\upsilon |M_{G}^{\ast})$ & $M_{G}^{\ast} = \{p_{1}, r_{1},r_{2}, r_{\aleph+1}, r_{\aleph+2}\}$ \\
 \hline
 \vspace{-0.4mm}
 $\Im_{M}(r_{\pounds}|M_{G}^{\ast})$: ($\pounds=1$)                    & $(\pounds+1,0,3,\aleph-\pounds+3,\aleph+2)$ \\
 \hline
 $\Im_{M}(r_{\pounds}|M_{G}^{\ast})$: ($\pounds=2$)                    & $(\pounds+1,\pounds+1,0,\aleph-\pounds+3,\aleph-\pounds+4)$ \\
 \hline
 $\Im_{M}(r_{\pounds}|M_{G}^{\ast})$: ($3\leq \pounds \leq \aleph$)    & $(\pounds+1,\pounds+1,\pounds,\aleph-\pounds+3,\aleph-\pounds+4)$ \\
 \hline
 $\Im_{M}(r_{\pounds}|M_{G}^{\ast})$: ($\pounds=\aleph+1$)             & $(\pounds+1,\pounds+1,\pounds,0,\aleph-\pounds+4)$ \\
 \hline
 $\Im_{M}(r_{\pounds}|M_{G}^{\ast})$: ($\pounds=\aleph+2$)             & $(2\aleph-\pounds+4,2\aleph-\pounds+4,\pounds,\pounds-\aleph+1,0)$ \\
 \hline
 $\Im_{M}(r_{\pounds}|M_{G}^{\ast})$: ($\aleph+3 \leq \pounds \leq 2\aleph+1$)   & $(2\aleph-\pounds+4,2\aleph-\pounds+4,2\aleph-\pounds+5,\pounds-\aleph+1,\pounds-\aleph)$ \\
 \hline
 \end{tabular}
 \end{center}
\vspace{-0.4mm}
and the mixed metric codes for the edges $\{\epsilon=p_{\pounds}p_{\pounds+1}, p_{\pounds}q_{\pounds}, q_{\pounds}q_{\pounds+1}, q_{\pounds}r_{\pounds}| \pounds=1,2,3,...,n\}$ regarding the set $M_{G}^{\ast}$ are shown in the tables below, respectively.
\vspace{-0.4mm}
\begin{center}
 \begin{tabular}{{|m{16.5em}|m{21.5em}|}}
 \hline
  $\Im_{M}(\epsilon |M_{G}^{\ast})$ & $M_{G}^{\ast} = \{p_{1}, r_{1},r_{2}, r_{\aleph+1}, r_{\aleph+2}\}$ \\
 \hline
 \vspace{-0.4mm}
 $\Im_{M}(p_{\pounds}p_{\pounds+1}|M_{G}^{\ast})$: ($ \pounds=1$)                    & $(\pounds-1,\pounds+1,2,\aleph-\pounds+2,\aleph-\pounds+3)$ \\
 \hline
 $\Im_{M}(p_{\pounds}p_{\pounds+1}|M_{G}^{\ast})$: ($2 \leq \pounds \leq \aleph$) & $(\pounds-1,\pounds+1,\pounds,\aleph-\pounds+2,\aleph-\pounds+3)$ \\
 \hline
 $\Im_{M}(p_{\pounds}p_{\pounds+1}|M_{G}^{\ast})$: ($\pounds=\aleph+1$)        & $(\pounds-1,\pounds+1,\pounds,\pounds-\aleph+1,\aleph-\pounds+3)$ \\
 \hline
 $\Im_{M}(p_{\pounds}p_{\pounds+1}|M_{G}^{\ast})$: ($\pounds=\aleph+2$)     & $(2\aleph-\pounds+1,2\aleph-\pounds+3,\pounds,\pounds-\aleph+1,\pounds-\aleph)$ \\
 \hline
 $\Im_{M}(p_{\pounds}p_{\pounds+1}|M_{G}^{\ast})$:($\aleph+3 \leq \pounds \leq 2\aleph+1$)     & $(2\aleph-\pounds+1,2\aleph-\pounds+3,2\aleph-\pounds+4,\pounds-\aleph+1,\pounds-\aleph)$ \\
 \hline
 \end{tabular}
 \end{center}
\vspace{-0.4mm}
\begin{center}
 \begin{tabular}{{|m{16.0em}|m{22.0em}|}}
 \hline
  $\Im_{M}(\epsilon |M_{G}^{\ast})$ & $M_{G}^{\ast} = \{p_{1}, r_{1},r_{2}, r_{\aleph+1}, r_{\aleph+2}\}$ \\
 \hline
 \vspace{-0.4mm}
 $\Im_{M}(p_{\pounds}q_{\pounds}|M_{G}^{\ast})$: ($ \pounds=1$)                    & $(\pounds-1,\pounds,2,\aleph-\pounds+2,\aleph+1)$ \\
 \hline
 $\Im_{M}(p_{\pounds}q_{\pounds}|M_{G}^{\ast})$: ($2\leq \pounds \leq \aleph+1$) & $(\pounds-1,\pounds,\pounds-1,\aleph-\pounds+2,\aleph-\pounds+3)$ \\
 \hline
 $\Im_{M}(p_{\pounds}q_{\pounds}|M_{G}^{\ast})$: ($\pounds=\aleph+2$)     & $(2\aleph-\pounds+2,2\aleph-\pounds+3,\pounds-1,\pounds-\aleph,\aleph-\pounds+3)$ \\
 \hline
 $\Im_{M}(p_{\pounds}q_{\pounds}|M_{G}^{\ast})$: ($\aleph+3 \leq \pounds \leq 2\aleph+1$)     & $(2\aleph-\pounds+2,2\aleph-\pounds+3,2\aleph-\pounds+4,\pounds-\aleph,\pounds-\aleph-1)$ \\
 \hline
 \end{tabular}
 \end{center}
\vspace{-0.4mm}
\begin{center}
 \begin{tabular}{{|m{16.5em}|m{21.5em}|}}
 \hline
  $\Im_{M}(\epsilon |M_{G}^{\ast})$ & $M_{G}^{\ast} = \{p_{1}, r_{1},r_{2}, r_{\aleph+1}, r_{\aleph+2}\}$ \\
 \hline
 \vspace{-0.4mm}
 $\Im_{M}(q_{\pounds}q_{\pounds+1}|M_{G}^{\ast})$: ($ \pounds=1$)                    & $(\pounds,\pounds,1,\aleph-\pounds+1,\aleph-\pounds+2)$ \\
 \hline
 $\Im_{M}(q_{\pounds}q_{\pounds+1}|M_{G}^{\ast})$: ($2\leq \pounds \leq \aleph$) & $(\pounds,\pounds,\pounds-1,\aleph-\pounds+1,\aleph-\pounds+2)$ \\
 \hline
 $\Im_{M}(q_{\pounds}q_{\pounds+1}|M_{G}^{\ast})$: ($\pounds=\aleph+1$)     & $(\pounds,\pounds,\pounds-1,\pounds-\aleph,\aleph-\pounds+2)$ \\
 \hline
 $\Im_{M}(q_{\pounds}q_{\pounds+1}|M_{G}^{\ast})$: ($\pounds=\aleph+2$)  & $(2\aleph-\pounds+2,2\aleph-\pounds+2,\pounds-1,\pounds-\aleph,\pounds-\aleph-1)$ \\
 \hline
 $\Im_{M}(q_{\pounds}q_{\pounds+1}|M_{G}^{\ast})$:($\aleph+3 \leq \pounds \leq 2\aleph+1$)     & $(2\aleph-\pounds+2,2\aleph-\pounds+2,2\aleph-\pounds+3,\pounds-\aleph,\pounds-\aleph-1)$ \\
 \hline
 \end{tabular}
 \end{center}
\vspace{-0.4mm}
\begin{center}
 \begin{tabular}{{|m{16.0em}|m{22.0em}|}}
 \hline
  $\Im_{M}(\epsilon |M_{G}^{\ast})$ & $M_{G}^{\ast} = \{p_{1}, r_{1},r_{2}, r_{\aleph+1}, r_{\aleph+2}\}$ \\
 \hline
 \vspace{-0.4mm}
 $\Im_{M}(q_{\pounds}r_{\pounds}|M_{G}^{\ast})$: ($ \pounds=1$)                    & $(\pounds,0,2,\aleph-\pounds+2,\aleph+1)$ \\
 \hline
 $\Im_{M}(q_{\pounds}r_{\pounds}|M_{G}^{\ast})$: ($\pounds=2$)                     & $(\pounds,\pounds,0,\aleph-\pounds+2,\aleph-\pounds+3)$ \\
 \hline
 $\Im_{M}(q_{\pounds}r_{\pounds}|M_{G}^{\ast})$: ($3 \leq \pounds \leq \aleph$) & $(\pounds,\pounds,\pounds-1,\aleph-\pounds+2,\aleph-\pounds+3)$ \\
 \hline
 $\Im_{M}(q_{\pounds}r_{\pounds}|M_{G}^{\ast})$: ($\pounds=\aleph+1$)     & $(\pounds,\pounds,\pounds-1,0,\aleph-\pounds+3)$ \\
 \hline
 $\Im_{M}(q_{\pounds}r_{\pounds}|M_{G}^{\ast})$: ($\pounds=\aleph+2$)     & $(2\aleph-\pounds+3,2\aleph-\pounds+3,\pounds-1,\pounds-\aleph,0)$ \\
 \hline
 $\Im_{M}(q_{\pounds}r_{\pounds}|M_{G}^{\ast})$: ($\aleph+3 \leq \pounds \leq 2\aleph+1$)     & $(2\aleph-\pounds+3,2\aleph-\pounds+3,2\aleph-\pounds+4,\pounds-\aleph,\pounds-\aleph-1)$ \\
 \hline
 \end{tabular}
 \end{center}
 \vspace{-0.4mm}

Now, from these mixed metric codes of the edges and the vertices of the Web graph $\mathbb{W}_{n}$ concerning the set $M_{G}^{\ast}$, we ascertain that for $1\leq \pounds \leq n$ and $\pounds\neq 1,2, \aleph+1, \aleph+2$, $\Im_{M}(q_{\pounds}|M_{G}^{\ast})=\Im_{M}(r_{\pounds}q_{\pounds}|M_{G}^{\ast})$. For the remaining mixed metric edges and vertices codes in $\mathbb{W}_{n}$, we find no two vertices or edges with the same mixed metric codes. For $\pounds=3,4,...,\aleph-1,\aleph,\aleph+2,\aleph+3,...,n$, we see that $\Im_{M}(q_{\pounds}|M_{G}^{\ast}\cup\{r_{\pounds}\})\neq\Im_{M}(r_{\pounds}q_{\pounds}|M_{G}^{\ast}\cup\{r_{\pounds}\})$. From this, we obtain $\Im_{M}(q_{\pounds}|M_{G}^{m})\neq\Im_{M}(r_{\pounds}q_{\pounds}|M_{G}^{m})$, for any $1\leq \pounds \leq n$ and so $|M_{G}^{m}|\leq n+1$, suggesting that $mdim(\mathbb{W}_{n})=n+1$ in this case also, which concludes the theorem.
\end{proof}

\hspace{-5.0mm}\textbf{Theorem 8.}
{\it The independent mixed metric number for the Web graph $\mathbb{W}_{n}$, for $n\geq4$ is $n+1$.}
\begin{proof}
  For proof, refer to Theorem $7$.
\end{proof}

\section{Conclusion}

In this examination, we determined the MMD for the two important families of the plane graphs viz., the Web graph $\mathbb{W}_{n}$ (\cite{ys}, see Figure 2) and the Prism allied graph $\mathbb{D}_{n}^{t}$ (\cite{fwr}, see Figure 1), and which was found to be non-constant unbounded for these two families of the plane graph. Moreover, for the Web graph $\mathbb{W}_{n}$ and the Prism allied graph $\mathbb{D}_{n}^{t}$, we unveil that the mixed metric basis set $M_{G}^{m}$ is independent. From preliminaries and these results, for these two families of plane graphs $H=D^{t}_{n}$ and $H=\mathbb{W}_{n}$, we determined that $\beta(H)<\beta_{E}(H)<\beta_{M}(H)$, for every $n\geq5$.

\end{document}